\documentclass[oneside,12pt]{amsart}
\usepackage {CJKutf8}
\usepackage{amssymb}
\usepackage{amsthm}
\usepackage[abbrev]{amsrefs}
\usepackage{hyperref}

\usepackage{enumitem}

\usepackage{todonotes}

\usepackage{float} % to fix tables

\usepackage{pgfplots}

\hypersetup{colorlinks=true,allcolors=[rgb]{0,0,0.6}}

\usepackage{tikz,pgflibraryshapes}
\usetikzlibrary{3d,arrows.meta,decorations.pathreplacing}

\usepackage{enumitem}

\newcommand{\vertiii}[1]{{\left\vert\kern-0.25ex\left\vert\kern-0.25ex\left\vert #1 
		\right\vert\kern-0.25ex\right\vert\kern-0.25ex\right\vert}}

\theoremstyle{plain}
\newtheorem{theorem}{Theorem}
\newtheorem{proposition}[theorem]{Proposition}
\newtheorem{lemma}[theorem]{Lemma}
\newtheorem{corollary}[theorem]{Corollary}

\theoremstyle{remark}

\newtheorem{remark}{Remark}

\theoremstyle{definition}

\numberwithin{equation}{section}
\numberwithin{theorem}{section}

\usepackage[normalem]{ulem}
\usepackage{soul}
\setstcolor{blue}

\definecolor{green}{rgb}{0.0, 0.5, 0.5}

\definecolor{lgray}{gray}{0.9}
\definecolor{llgray}{gray}{0.95}
\definecolor{lllgray}{gray}{0.975}

\newcommand{\R}{\mathbb{R}}

\newcommand{\cB}{\mathcal{B}}
\newcommand{\cD}{\mathcal{D}}

\newcommand{\cX}{\mathcal{X}}

\newcommand{\nc}{\newcommand}

\nc{\al}{\alpha}
\nc{\h}{\delta}
\nc{\G}{\Gamma}
\nc{\et}{\eta} 
\nc{\g}{\gamma}
\nc{\gam}{\gamma}
\nc{\ka}{\kappa}
\nc{\lam}{\lambda}

\nc{\Lam}{\Lambda}
\nc{\Om}{\Omega}
\nc{\om}{\omega}

\nc{\ta}{\tau}
\nc{\w}{\omega}
\nc{\io}{\iota}
\nc{\z}{\zeta}
\nc{\s}{\sigma}
\nc{\si}{\sigma}
\nc{\Si}{\Sigma}
\nc{\vphi}{\varphi}

\newcommand{\eps}{\epsilon}

\renewcommand{\k}{\kappa}

\nc{\ran}{\rangle}
\nc{\lan}{\langle}

\newcommand{\one}{\mathbf{1}}

\newcommand{\supp}{\operatorname{supp}}

\newcommand{\dist}{\mathrm{dist}}

\nc{\bfone}{{\bf 1}}
\nc{\1}{{\bf 1}}

\nc{\dd}{\mathrm{d}}

\renewcommand{\div}{\operatorname{div}}

\newcommand{\DETAILS}[1]{}

\newcommand{\abs}[1]{\ensuremath{\lvert#1\rvert}}
\newcommand{\norm}[1]{\ensuremath{\left\lVert#1\right\rVert}}
\newcommand{\sbr}[1]{\left[#1\right]}

\newcommand{\del}[1]{\Bigl(#1\Bigr)}
\newcommand{\thmref}[1]{Theorem~\ref{#1}}

\newcommand{\secref}[1]{Section~\ref{#1}}
\newcommand{\lemref}[1]{Lemma~\ref{#1}}
\newcommand{\propref}[1]{Proposition~\ref{#1}}
\newcommand{\remref}[1]{Remark~\ref{#1}}
\newcommand{\figref}[1]{Figure~\ref{#1}}
\newcommand{\corref}[1]{Corollary~\ref{#1}}

\newcommand{\inn}[2]{\left\langle#1,\,#2\right\rangle}

\newcommand{\grad}{\nabla}
\newcommand{\Lap}{\Delta}
\newcommand{\di}{\partial}

\newcommand{\Rb}{\mathbb{R}}

\newcommand{\cp}{\mathrm{c}}

\newcommand{\es}{\operatorname*{ess\,sup}}

\DeclareMathOperator{\inj}{inj}
\DeclareMathOperator{\Ric}{Ric}

\usepackage{amsthm}

\theoremstyle{definition}

\renewcommand{\bar}{\overline}

\author{Marius Lemm}
\address{Department of Mathematics, University of T\"ubingen, 72076 T\"ubingen, Germany }
\email{marius.lemm@uni-tuebingen.de}

\author{Israel Michael Sigal} \address{Department of Mathematics, University of Toronto, Toronto, M5S 2E4, Ontario, Canada} \email{im.sigal@utoronto.ca}

\author{Jingxuan Zhang} \address{Yau Mathematical Sciences Center, Tsinghua University, Haidian District, Beijing 100084, China} \email{jingxuan@tsinghua.edu.cn}

\date{\today}

\keywords{Parabolic equations;  nonlinear degenerate equations;  heat kernel estimates}
\subjclass[2020]{35K08, 35K65, 35B45}

\begin{document}
	
	\title[Diffusion bounds]{Diffusion bounds for non-autonomous degenerate parabolic equations
	}

	\maketitle
	\bibliographystyle{abbrv}
	\begin{abstract}
 {We consider possibly degenerate non-autonomous  parabolic equations in $L^p$ spaces for all $1\le p\le\infty$, and prove Gaussian space-time  upper bounds of  the Davies-Gaffney, or integrated Nash-Aronson, type on their solutions. Our bounds cover both linear and nonlinear equations.}

	\end{abstract} 
	
	\section{Introduction}

	Diffusion bounds quantify how solutions to parabolic equations 
	\begin{align}
		\di_t u =L u,\label{11b}
	\end{align}
	where $L$ is an elliptic operator (generator), 
	 spread in space over time. In the classical uniformly elliptic setting, they are encapsulated by pointwise Gaussian heat-kernel estimates of Nash--Aronson type \cite{Aro,Nas} and their integrated variants due to Davies and Gaffney  \cite{Dav,Davb,Davc,Gaf}. Investigating them under relaxed assumptions on the generator and on the geometry  of the underlying spaces constitutes a fundamental and rich  subject in the PDE and geometry literature, see  \cite{Sala,Salb,Gri,GH,GJK,GHHb,GHH,grigor1994integral,grigor1994heat,cheng1981upper,LY,AN,MS,MSa} for some contributions.  
	 
	 Of many important results, we mention that Li-Yau \cite{LY} gave the first extension to Riemannian manifolds with curvature bounds by leveraging gradient estimates building on earlier work with Cheng \cite{cheng1981upper}. The curvature assumption was later weakened by Saloff-Coste \cite{Sala} and more general Riemannian manifolds were covered by Grigor'yan  by introducing new ideas based on Faber-Krahn inequalities \cite{grigor1994heat} and the maximum principle \cite{grigor1994integral}; see also \cite{Gri,grigoryan2009heat}. A recent focus has been on the extension to metric spaces \cite{GH,GJK} and to jump-type Dirichlet forms \cite{GHH}   for autonomous linear equations. We note that most of the existing literature relies on uniform ellipticity of the generator or at least some form of hypoellipticity \cite{Dav}.\\

	In this paper, we develop  space-time off-diagonal upper bounds of Davies-Gaffney type for a class of parabolic problems, which includes  degenerate, non-autonomous   ones, covering both linear and non-linear equations. 
	Concretely, we consider the evolution equation
	\begin{align}
		\label{PE}
		\di_t u  = L u,\quad \quad u:\Om\times I\to \Rb.
	\end{align}
	Here, $I$ is a time interval, $\Omega$ is either  the Euclidean space $\Rb^n$, or a   domain in a  smooth Riemannian manifold (possibly with boundary), and  $L$ is a non-autonomous divergence-form linear operator (generator),
	\begin{align}
		\label{Lgen}
		L u = \div (a  \, \grad u ) + \inn{ b}{\grad u}  +c u,
	\end{align}
	where $\inn{\cdot}{\cdot}$ is either the Euclidean inner product or a Riemannian metric on $\Om$, and the coefficients $a,\,b,$ and $c$ are measurable functions of $x\in\Om$ and $t\in I$.
	
	 We require  mild assumptions on $a,b,c$,
	substantially weaker  than ellipticity or hypoellipticity, e.g.
	 the matrix-valued function $a$ is allowed to vanish  {on arbitrary subsets of $\Om$}. Since the usual parabolic solution theory breaks down in this setting, we work with weak solutions throughout.

	Our main results are $L^p$-upper bounds on weak solutions to \eqref{PE} for all $1\le p \le \infty$. For $p=2$ and a large class of $\Om$'s, they generalize the Davies-Gaffney ones. Given $Y\subset \Omega$,  consider an initial data with $\supp u_0\subset Y$. Given another subset $X\subset \Omega$ disjoint from $Y$, we prove (see \thmref{thm})
	\begin{align}\label{eq:intro_result}
		\norm{u(t)}_{L^p(X) } \le   \exp\del{-\frac{(d_{XY}-\k't)^2}{4 \al'  t}}\norm{u_0}_{L^p(Y)},\qquad  0< t< d_{XY}/\k'.
	\end{align}
	Here,
	 $\alpha'>0$ is a  constant depending  on the coefficients of $L$, $d_{XY}$ is the (geodesic) distance between $X$ and $Y$,  and $\k'>0$ is a constant depending  on the background geometry.

		The bound \eqref{eq:intro_result} establishes that outside the ballistic region  $ t\in (0,   {d_{XY}}/\k')  $, the solution $u(t)$ decays \textit{diffusively}, i.e., as a function of  $\frac{d_{XY}^2}{t}$ at an exponential rate.   {The ballistic validity interval} is necessary, as the   drift and $\grad a$ components could lead to a traveling-wave like ballistic dynamics  (see   \remref{rem22}\textup(iii)). The same could occur in nonlinear settings e.g., for geometric reasons like negative curvature; see \cite{davies1988heat} and \cite[Appendix A]{lemm2018heat}. Existing diffusion  bounds of Nash-Aronson type often also allow for a drift component and leave the validity interval implicit in the growth of the constant prefactor (\cite{Aro,Nas}). 

		We  establish  \eqref{eq:intro_result} under weak assumptions that cover a broad class of parabolic equations on a large class of Riemannian manifolds (Theorem \ref{thm1}).	 Crucially, it only requires   $a(\cdot,t)\geq 0$. 
		
		To compare our findings with earlier results, 
{Ataei and Nystr\"om proved the existence of a fundamental solution and Gaussian bounds (\cite{AN}), when the degeneracy is dictated by a spatial $A_2$-weight. The assumption in \cite{AN} thus only allows $a$ to vanish on null sets.
Heat kernel estimates from  weighted $L^1\to L^1$ propagator estimates have previously been studied in \cite{MS,MSa} for autonomous operators satisfying a Sobolev embedding inequality. This assumption thus  also only allows $a$  to vanish on null sets.
 The $L^2$-off diagonal bound for  degenerate elliptic operators was proved by ter Elst et al in \cite{tERSZa} for  autonomous,  zero-drift  $L$, using   a viscosity method (ours deals directly with the degenerate propagator). 
Extensions to higher-order elliptic operators were obtained in \cite{Dava,Bar,BB}. }

	Our approach is relatively simple and robust.  {Like the classical Davies' method (\cite{Dava,Davc}), it relies on the exponential deformation (twisting) of the generator $L$. We extend Davies' method to a combination of time-dependent, strongly degenerate operators, and a common Gaussian exponent for all \(1\le p\le\infty\), as well as a conditional nonlinear version. }

Our proof does not rely on any tools related to ellipticity, such as Li-Yau-type gradient estimates \cite{LY}, the maximum principle \cite{grigor1994integral},  Sobolev inequalities \cite{Dav,Davd}, or the spectral gap of $L$ (manifesting, e.g., through Faber-Krahn type inequalities) \cite{grigor1994heat}, or hypoellipticity condition (\cite{S_n}). The method extends to nonlinear degenerate parabolic equations  (e.g., the~McKean-Vlasov equation); see Section \ref{sec:examples}.

	\subsection*{Organization of the paper}
	In \textit{Section \ref{sec2}}, we introduce the setup and the basic solution theory we work with. Afterwards, we state our main results, Theorems~\ref{thm} and \ref{thm1}, on diffusion bounds in the Euclidean and Riemannian setting, respectively, as well as Theorem \ref{thm2} about the extension to nonlinear PDE. In  \textit{Section \ref{secPfThm1}}, we develop the overall proof strategy and prove Theorem~\ref{thm1}. In \textit{Section \ref{sec42}}, we describe the necessary modification to obtain the nonlinear result in Theorem  \ref{thm2}.  In \textit{Section \ref{sec:examples}}, we discuss applications of our bounds  to examples of nonlinear PDE, such as the  McKean-Vlasov equations.

	\subsection*{Notation}
	Given a smooth domain $\Om$ in a Riemannian manifold $(M,g)$,
	we denote by $d(x,y)$   the geodesic distance between $x,\,y\in \Om$ w.r.t.~$g$, and   $\dist(X,Y)=\inf _{x\in X,y\in Y}d(x,y)$ the distance between $X,\,Y\subset\Om$. For $r>0$ and $z\in \Om$,  $B_r(z)$ denotes the geodesic ball of radius $r$ around $z$.
	For $S\subset \Om$, we  denote by $\overline{S}$  the closure of $S$ in $\Om$,  and 
	$S^\cp = \Omega \setminus S$   the complement of $S$ in $\Om$. 	
	{For a matrix-valued function $a$, we write $\norm{a}_{L^\infty}=\es\norm{a(x)}_{\mathrm{op}}$, and  $ \div a$ stands for the row divergence with $j$-th entry given by $ (\div a )^j= \sum _i \di_i a^{ij}$.} For a  Banach space $\cX$, denote by $\cB(\cX)$ the class of bounded operators on $\cX$. 
	When no confusion arises, we abbreviate $\norm{\cdot}_p\equiv \norm{\cdot }_{L^p}$, and $\norm{\cdot}_{p\to p}$ the operator norm on $\cB(L^p)$,  for $1\le p\le \infty$.

	\section{Setup and results}
	\label{sec2}

	\subsection{Cauchy problem}
 Let $I=(0,\infty) $, and let  $\Om$ be either $\Rb^n$, or  a relatively compact domain with a smooth boundary in an  $n$-dimensional smooth connected Riemannian manifold $(M,g)$ (not necessarily complete), cf.~Grigor'yan (\cite{Gri}).

  We study  weak  solutions 
 to the Cauchy problem with initial condition $u_0\in L^p,$ 
 \begin{align}
 	\label{PE1}
 	\begin{cases}
 		\partial_t u = L u, & \text{in } \Omega\times I,\\
 		u(\cdot, 0) = u_0, &
 	\end{cases}
 \end{align}with $L$ given by \eqref{Lgen},
 subject to the homogeneous Neumann boundary condition 
 \begin{align}
 	\label{BC}
 	\inn{a\grad u }{\nu}\vert_{\di\Om}=0 \quad \text{if $\di\Om\ne\emptyset$.}
 \end{align}

In the expression \eqref{Lgen} of the operator $L$,   in the Riemannian case, $\grad$ and $\div$   are the Riemannian gradient and divergence   associated to~the metric $g$, respectively.
	For each fixed $t$, the coefficients of  $L=L(t)$ consist of a symmetric tensor field
	$a(\cdot, t)$, a vector field $b(\cdot, t)$, and a function $c(\cdot,t)$ on $\Om$.

We assume  the coefficients of $L$ satisfy
\begin{align}
	\label{aCond}
	\begin{cases}
			& a,\,b\in L^\infty(I,W^{1,\infty} (\Omega)),\quad c\in L^\infty(I,L^\infty(\Om)),\\
	&a \ge 0, \quad  {\div b  -c \ge 0},\quad   c  \le0,\quad \text{a.e.~in }\Om\times I,\\
	&  \inn{b }{\nu} \vert_{\di\Om}=0\quad \text{if $\di\Om\ne\emptyset$,}
	\end{cases}
\end{align}
	where $\nu$ denotes the outward unit normal vector on $\di\Om$. 
	
Our setup includes the case $a\equiv 0$, in which one has a transport equation and standard parabolic solution theory (e.g., energy estimates) breaks down.
	Therefore, we work with \textit{weak} solutions in $L^p$-sense, as is common for transport equations \cite{GG,DL}. 	 
		Given $u_0\in L^p(\Om)$, we say $u$ is  an $L^p$-weak solution to \eqref{PE1}--\eqref{BC}, if $u\in L^\infty(I,L^p(\Om))$ and
		\begin{align}
			\label{DS}
			\int_{I} \int_\Om u (\di_t+L^*) \varphi\,dx\,dt+\int_\Om u_0 \varphi(\cdot,0)\,dx =0, 
		\end{align}
		for all $ \varphi\in C_c^\infty(\overline \Om\times [0,\infty))$ with $\inn{a\grad \varphi }{\nu}\vert_{\di\Om}=0$ if $\di\Om\ne\emptyset$, where  
		$$
		L^* \varphi=\operatorname{div}\left(a\nabla \varphi\right)-\div(b\varphi)+c \varphi 
		$$
		is the formal adjoint of $L$.

The regularity conditions in \eqref{aCond} ensure the Cauchy problem
\eqref{PE1}--\eqref{BC} admits a solution in $L^p$, $ 1\le p\le \infty$, in the following sense:
\begin{proposition}[Existence of propagator]\label{prop1}Let \eqref{aCond} hold. Then 
there exists 
	a family of operators $P_t\in \cB(L^2(\Om))$, $t\ge0 ,$  s.th.
\begin{enumerate}[label=\textup{(P\arabic*)},ref=\textup{(P\arabic*)}]
	\item\label{P1}  {$t\mapsto P_t$ is continuous in the weak operator topology in $\cB(L^2)$;}
	\item\label{P2} for each $t$, $P_t$ is positivity-preserving and extends to a contraction on $L^p$, $1\le p\le \infty$;
	\item\label{P3} for each $u_0\in L^p(\Om)$, $u(t)=P_tu_0$ is an $L^p$-weak solution to \eqref{PE1}--\eqref{BC}.
\end{enumerate}
\end{proposition}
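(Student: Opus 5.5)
We will construct $P_t$ by parabolic regularization (vanishing viscosity) together with mollification of the coefficients, and then pass to the limit using uniform $L^p$-contraction bounds. For $\eps>0$ we set $a_\eps = a_\delta+\eps\,\mathrm{Id}$, where $a_\delta,b_\delta,c_\delta$ are smooth approximations of $a,b,c$ in $x$ and $t$. We choose $\delta=\delta(\eps)\to0$ so that the structural conditions in \eqref{aCond} are preserved: $a_\delta\ge0$, $c_\delta\le 0$, $\div b_\delta - c_\delta\ge 0$, and $\inn{b_\delta}{\nu}=0$ on $\di\Om$. Mollification is a positive averaging, so the sign conditions pass to it; near $\di\Om$ we will use a reflection or partition of unity to keep the tangency of $b_\delta$, and we will accept an $O(\delta)$ error in $\div b_\delta - c_\delta$, absorbed by adding a small constant to $c$. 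For the uniformly parabolic operator $L_\eps$ with Neumann condition, classical theory (Galerkin/Lions for non-autonomous forms, or Friedman) gives a positivity-preserving evolution family $P^\eps_{t}$ on $L^2$ that is strongly continuous in $t$.

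The core estimate is the uniform contraction $\norm{P^\eps_t}_{p\to p}\le1$ for $1\le p\le\infty$. For $u_0\ge0$ smooth and $u=P^\eps_t u_0$, we differentiate $\int u^p$ and integrate by parts using the Neumann conditions:
\[
\frac{d}{dt}\int u^p = -p(p-1)\int u^{p-2}\inn{a_\eps\grad u}{\grad u} - \int (\div b)\,u^p + p\int c\,u^p .
\]
This equals $-p(p-1)\int u^{p-2}\inn{a_\eps\grad u}{\grad u} - \int(\div b - c)u^p + (p-1)\int c\,u^p \le 0$ by \eqref{aCond}. Hence $\norm{u(t)}_p\le\norm{u_0}_p$. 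Positivity, from the maximum principle for $L_\eps$, lets us reduce to $u_0\ge0$ via $|P^\eps_t u_0|\le P^\eps_t|u_0|$. Letting $p\to\infty$, or using duality since $L^*$ satisfies the analogous condition, gives the $L^\infty$ case. Note that the two conditions $\div b-c\ge0$ and $c\le0$ are exactly what make the $p=1$ and $p=\infty$ ends work, and interpolation covers the rest.

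For the limit, the family $P^\eps_t$ is bounded by $1$ in $\cB(L^2)$, so by diagonal extraction over rational $t$ we obtain a weak operator limit $P_t$. Equicontinuity in $t$ then lets us extend the limit to all $t$ and gives \ref{P1}: for $f,g$ smooth, $\inn{P^\eps_t f}{g}-\inn{f}{g}=\int_0^t\inn{P^\eps_s f}{L_\eps^* g}\,ds$ is Lipschitz in $t$ uniformly in $\eps$, since $a,b\in W^{1,\infty}$. Weak limits preserve positivity and the $L^p$ bounds, as the $L^p$ norm is lower semicontinuous and $L^2\cap L^p$ is dense; this gives \ref{P2}, extending $P_t$ to $L^p$, and for $p=\infty$ by weak-* duality. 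For \ref{P3}, we pass to the limit in the weak formulation \eqref{DS} for $P^\eps_t u_0$. The coefficients of $L_\eps^*\varphi$ converge strongly in $L^1_{loc}$ or boundedly a.e., while $P^\eps u_0$ converges weakly(-*) and is bounded in $L^\infty(I,L^p)$, so the products converge.

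The main obstacle is this last step together with the boundary condition. The test functions in \eqref{DS} satisfy $\inn{a\grad\varphi}{\nu}=0$ rather than $\di_\nu\varphi=0$, so they are not admissible for the regularized Neumann problem with $a_\eps$. The plan is to reorganize the approximation so that the regularized boundary condition is $\inn{a_\eps\grad u}{\nu}=0$ with $a_\eps\grad$ matching on $\di\Om$; for example, we take the viscosity $\eps\,\mathrm{Id}$ to vanish in a boundary layer whose thickness tends to zero, and we control the resulting boundary term by the uniform $L^p$ bounds. A second subtlety is that the weak limit might a priori depend on the subsequence. This is harmless, since the proposition only asserts existence, not uniqueness.
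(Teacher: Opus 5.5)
Your plan largely follows the paper's Appendix~A: viscous regularization by $\eps\,\mathrm{Id}$, uniform $L^p$-contractivity from $c\le0$ and $\div b-c\ge0$, diagonal/Arzel\`a--Ascoli extraction, and lower semicontinuity plus duality for \ref{P2}. Your computation of $\frac{d}{dt}\int u^p$ is correct, as is your remark that $L^*$ satisfies the same sign conditions. Mollifying the coefficients is an extra layer the paper avoids, since the variational theory needs only bounded measurable, uniformly elliptic coefficients. In $\Rb^n$ your passage to the limit in \ref{P3} is fine. The genuine gap is \ref{P3} when $\di\Om\ne\emptyset$: the obstacle you identify is misdiagnosed, and your proposed cure would not work.

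The regularized Neumann problem is solved variationally, so its weak formulation \eqref{B5} holds for \emph{every} $v\in H^1(\Om)$, with no boundary condition on $v$. Test it with an admissible $\varphi$. Integrate by parts only the $a$- and $b$-terms, where $\inn{a\grad\varphi}{\nu}=0$ and $\inn{b}{\nu}=0$ kill the boundary integrals, and leave the viscous term in first-order form. This gives \eqref{A9}, with right-hand side $\eps\int_0^\infty\!\int_\Om\grad u^\eps\cdot\grad\varphi$.

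The ingredient you are missing is the energy estimate $\eps\int_0^\infty\norm{\grad u^\eps}_2^2\,dt\le\frac12\norm{u_0}_2^2$, which follows from the same sign conditions. It makes this term $O(\sqrt\eps)$ for $L^2$ data. General $L^p$ data then follow by density, with weak-$*$ approximation for $p=\infty$.

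Your uniform $L^p$ bounds cannot replace this estimate. Integrating $\eps\Lap$ by parts twice leaves $\eps\int_{\di\Om}u^\eps\,\di_\nu\varphi$, which involves traces of $u^\eps$ not controlled by $L^p(\Om)$ norms. Switching the viscosity off in a boundary layer is worse. Since $a$ may vanish near $\di\Om$, the regularized problem then loses uniform parabolicity there. The classical theory you rely on to construct $P^\eps_t$ no longer applies, and you are back to the degenerate problem.

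If you keep the mollification, the same energy bound must also absorb the mismatch between $(a_\delta,b_\delta)$ and $(a,b)$ in the first-order terms. For example,
\[
\Bigl|\int\!\!\int\inn{(a_\delta-a)\grad u^\eps}{\grad\varphi}\Bigr|\le(2\eps)^{-1/2}\norm{(a_\delta-a)\grad\varphi}_{L^2}\norm{u_0}_2 ,
\]
so $\delta(\eps)$ must be chosen to make this vanish. It is simpler to drop the mollification, as the paper does.
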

 {This proposition is proved in  Appendix \ref{secWP}.	We call $P_t$ as in \propref{prop1} a propagator generated by $L$. In the remainder of this paper, $P_t$ denotes the propagator furnished by \propref{prop1}, obtained explicitly  by \eqref{eq:luwot}. }

 \subsection{Main results}

 Within this section, we set the time interval $I=(0,\infty)$, and write $d_{XY}=\dist(X,Y)$.

	\begin{theorem}[Diffusion estimate in $\Rb^n$] \label{thm}

	Assume  there exist $\al,\,\beta>0$ s.th.
	\begin{align}
	&	\norm{a}_{L^\infty(\Rb^n\times I)}\le \al,\quad
	 \norm{b}_{L^\infty(\Rb^n\times I)}+\norm{\div a}_{L^\infty(\Rb^n\times I)} \le\beta. \label{27}
	\end{align}
		Then, for any $\delta>0$, 
	there exist $\k=\k(n,\al,\beta,\delta)$
	and $\g=\g(n)$ s.th.~for any  disjoint non-empty measurable subsets $X,\,Y\subset \Rb^n,$
	\begin{align}\label{mainSp}
		\norm{\chi_X P_t\chi_Y}_{L^p(\Rb^n)\to L^p(\Rb^n)} \le   \exp\del{-\frac{( d_{XY}-\k' t)^2}{ 4 \al 't}},
	\end{align}	
	for all $1\le p\le \infty$ and $0< t <   d_{XY} /\k'$, provided  $d_{XY}\ge\delta$, where $\al'=\al/\g^2$ and $\k'=\k/\g$. 
	\end{theorem}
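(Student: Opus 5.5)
The plan is to run Davies' exponential twisting argument directly on the propagator $P_t$ from \propref{prop1}. I will use only the weak formulation \eqref{DS} and the positivity and $L^p$-contractivity in \ref{P2}, and never ellipticity. The key step is to fix a bounded Lipschitz weight $\psi$ with $\|\nabla\psi\|_\infty\le 1$. I will build it from a smoothed version of $\min(\dist(x,Y),d_{XY})$ at scale comparable to $\delta$, so that $\psi=0$ on $Y$ and $\psi\ge d_{XY}$ up to a controlled loss on $X$. For $\lambda>0$ I set $P^\lambda_t=e^{\lambda\psi}P_te^{-\lambda\psi}$. Formally this is the propagator of the twisted generator
\begin{align*}
L_\lambda = e^{\lambda\psi}Le^{-\lambda\psi} = L -\lambda\bigl(2\inn{a\nabla\psi}{\nabla\cdot} + \inn{\div a}{\nabla\psi}\,\cdot + \inn{b}{\nabla\psi}\,\cdot + \inn{a\nabla\psi}{\nabla\cdot}\text{-type terms}\bigr)+\lambda^2\inn{a\nabla\psi}{\nabla\psi} - \lambda (\div(a\nabla\psi)) .
\end{align*}
The smoothing is needed because $\div(a\nabla\psi)$ involves second derivatives of $\psi$, and these are bounded by $C(n)/\delta$. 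This is where the dependence of $\kappa$ on $\delta$, and of $\gamma$ on $n$, enters: the constant $\gamma$ accounts for the Lipschitz constant of the mollified distance, which rescales $\alpha\to\alpha/\gamma^2$ and $\kappa\to\kappa/\gamma$.

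Next I will prove the growth bound $\|P^\lambda_t\|_{p\to p}\le e^{(\alpha\lambda^2+\kappa\lambda)t}$ for all $1\le p\le\infty$. For $p=2$ I test the equation against $|u|^{p-2}u$; rigorously this means pairing $e^{\lambda\psi}$-weighted solutions via \eqref{DS}, approximating by smooth, nondegenerate coefficients $a+\epsilon$, and passing to the limit. For general $p$ I compute $\frac{d}{dt}\|v\|_p^p$ with $v=e^{\lambda\psi}u$. In that computation the degenerate diffusion term gives $-(p-1)\int|v|^{p-2}\inn{a\nabla v}{\nabla v}\le0$, which is the only place $a\ge0$ is used. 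The cross term $2\lambda\int |v|^{p-2}v\inn{a\nabla\psi}{\nabla v}$ is absorbed against this dissipation by Cauchy--Schwarz with respect to the nonnegative form $a$, leaving a $\lambda^2\alpha$ contribution. The first-order terms involving $b$ and $\div a$ are handled by integrating by parts. Using $\div b-c\ge0$, $c\le0$, and $\inn{b}{\nu}=0$ from \eqref{aCond}, together with \eqref{27}, their contribution is bounded by $\lambda(\beta+C(n)\alpha/\delta)\|v\|_p^p=:\kappa\lambda\|v\|_p^p$. The key point is to arrange the constants so that the same exponent works for all $p$; the $p$-dependence of the absorption coefficient is $\frac{p^2}{4(p-1)}$-type. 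To avoid it I will instead treat $p=1$ and $p=\infty$ by duality with the adjoint twisted equation and then interpolate with Riesz--Thorin, obtaining a common bound. Gronwall's inequality then gives the growth estimate.

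With the growth bound in hand, the conclusion is routine. Since $\chi_XP_t\chi_Y=\chi_Xe^{-\lambda\psi}P^\lambda_te^{\lambda\psi}\chi_Y$, we get
$$\|\chi_XP_t\chi_Y\|_{p\to p}\le e^{-\lambda d_{XY}+\alpha\lambda^2t+\kappa\lambda t}.$$
Optimizing with $\lambda=(d_{XY}-\kappa t)/(2\alpha t)>0$, which is valid for $t<d_{XY}/\kappa$, yields \eqref{mainSp} after the rescaling by $\gamma$.

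The main obstacle is making the twisted energy estimate rigorous for merely weak, possibly degenerate solutions, since $\nabla u$ need not exist. I would first try a vanishing-viscosity approximation $a_\epsilon=a+\epsilon$ with mollified $b,c$. For the approximations the computation is classical, with constants uniform in $\epsilon$. I would then pass to the limit in the weak sense using \ref{P1} and the explicit construction \eqref{eq:luwot} of $P_t$ as such a limit.
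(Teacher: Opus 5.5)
Your architecture matches the paper's: twisting by $e^{\lambda\psi}$, a separation function with $\abs{\grad^2\psi}\le C(n)/\delta$ so that $\k=\beta+C(n)\al/\delta$, endpoints plus duality plus Riesz--Thorin, and optimization in $\lambda$. But there is a genuine gap: the estimate everything rests on, the $p$-uniform bound $\norm{e^{\lambda\psi}P_te^{-\lambda\psi}}_{p\to p}\le e^{(\al\lambda^2+\k\lambda)t}$, is never derived at any endpoint. Your mechanism absorbs the cross term $-2\lambda\int\abs{v}^{p-2}v\inn{a\grad\psi}{\grad v}$ into the dissipation $(p-1)\int\abs{v}^{p-2}\inn{a\grad v}{\grad v}$ by Cauchy--Schwarz. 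That yields a $\lambda^2$-coefficient of size $\al\,p/(p-1)$, which is infinite at $p=1$ because the dissipation vanishes there. The factor $\frac{p^2}{4(p-1)}$ you quote blows up at $p=\infty$ as well. So retreating to the endpoints does not avoid the $p$-dependence: the endpoints are exactly where your argument gives nothing. The proposal never says how the endpoint bound with coefficient $\al\lambda^2$ is obtained, and since all other $p$ come from interpolation, this is the heart of the proof.

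The missing idea is that the cross term must not be absorbed, because it is a total derivative. Since $\abs{v}^{p-2}v\grad v=\frac1p\grad\abs{v}^p$, it equals $\frac{2\lambda}{p}\int\div(a\grad\psi)\abs{v}^p$, which is first order in $\lambda$ and controlled by $\k$. At $p=1$ this is exactly the paper's argument:

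\begin{itemize}
\item By positivity, $M(t)=\int e^{\lambda\psi}P_te^{-\lambda\psi}v$ with $v\ge0$ is a \emph{linear} functional of the weak solution.
\item Testing \eqref{DS} with $\theta(t)e^{\lambda\psi}\chi_R$ gives $M'\le(\al\lambda^2+\k\lambda)M$ distributionally, because $e^{-\lambda\psi}L^*e^{\lambda\psi}=\lambda^2\inn{a\grad\psi}{\grad\psi}+\lambda\bigl(\div(a\grad\psi)-\inn{b}{\grad\psi}\bigr)+c-\div b\le\al\lambda^2+\k\lambda$, using $\div b-c\ge0$. The cutoff errors vanish as $R\to\infty$.
\item No gradient of the solution ever appears, so what you call the main obstacle disappears and no viscosity is needed. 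This is also what lets the paper treat the nonlinear \thmref{thm2}.
\item The $p=\infty$ endpoint follows by applying the same $L^1$ bound, with $-\lambda\psi$, to the time-reversed adjoint generator via \lemref{lem:adjoint-compatible-viscosity}; Riesz--Thorin then finishes.
\end{itemize}

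The paper additionally rescales time to reduce to $\al=1$ and takes $\psi$ from \propref{propB1}.

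If you keep the viscosity route, it can work, but only with exactly the approximants $L+\eps\Lap$ that define $P_t$ in \eqref{eq:luwot}, not with mollified $b,c$. $P_t$ is only a subsequential limit and need not be unique, so a different scheme may converge to a different propagator. Finally, if you mollify $\min(\dist(\cdot,Y),d_{XY})$, the scale must be a small fraction of $\delta$, say $\delta/4$. A loss of order $2\delta$ can destroy $\psi(X,Y)>0$ when $d_{XY}$ is close to $\delta$.
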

	 {This theorem is proved shortly as a consequence of a more general result.}
			\begin{remark}  \label{rem22}
		\leavevmode

		\begin{enumerate}[label=(\roman*)]

\item 		 {The results here and below extend to any initial time $s\in\Rb$ by time translation, with $P_t$ replaced by $P_{t,s}$, $t$ in the estimates replaced by $t-s$, and the coefficient assumptions imposed on $(s,\infty)$.}
			\item	For $p=1$, the condition $c\le 0$ in \eqref{aCond} can be dropped.

			\item 	The ballistic validity interval is  required and optimal, since both $b$ and $\div a$ can generate ballistic motion. 
			This can be seen by considering the Laplacian with a constant drift $\Lap - b_0\cdot \grad$ with $\k= |b_0 |$. 
			
			Even when $b=0$, we still have the ballistic validity interval due to local ballistic motion induced by $\grad a$. 
			E.g., consider %
			\begin{align}
				\label{te}
				\di_t u=\di_x(a \di_x u),\quad u :\Rb\times (0,\infty)\to \Rb,
			\end{align} where $a(x,t)=A(x-\g t)$ for some $0<\g<1$ and, for some $R>0$,  the function  \begin{align}
				A(\mu):=\begin{cases}
					0,&\mu\le 0,\\ \mu,&0<\mu<R,\\R,&\mu\ge R.
				\end{cases}
			\end{align} Then conditions \eqref{aCond} hold, and the equation \eqref{te} has the  traveling wave solution   $u(x,t)=\varphi(x-\g t)$ in the weak sense, where  
			\begin{align}
				\varphi(\mu)=\begin{cases}0,& \mu\le0,\\
					\mu^{-\g},&0<\mu< R,\\
					e^{\g} R^{-\g} e^{-\g \mu / R}, & \mu \ge R.
				\end{cases}
			\end{align}
			Note also that $\varphi\in L^p$ for  all $1\le p< 1/\g$, and $\di_x a = 1$ near the ballistic wave front $x\sim \g t$.
		\end{enumerate}
	\end{remark}

\thmref{thm} is a special case of a  more general result as follows. 
 Let $C^\infty_b(\bar\Om)$ denote the class of bounded real-valued functions on $\bar \Om$ that is smooth in $\Om$ up to the boundary, and   satisfies $  \grad\psi \vert_{\di\Om}=0$ if $\di\Om\ne\emptyset$. Given $\k,\,\g>0$ and disjoint non-empty  subsets $X,\,Y\subset \Om$, we write \begin{align}\label{psiXY}
	\psi(X,Y):=\inf _X\psi  - \sup _Y \psi,
\end{align} and we say $X,\,Y$ are $(\k,\g)$-separated  if there exists $\psi\in C_b^\infty(\bar \Om)$ s.th.
\begin{align}
	&	\es_{\Om\times I}\abs{\inn{a \grad\psi}{\grad\psi}}\le1,\quad	 \es_{\Om\times I}\del{\abs{\div(a\grad\psi)}+\abs{\inn{b}{\grad\psi}}}\le \label{kCond}
	\k,\\&\psi(X,Y)\ge \g d_{XY}>0.\label{gCond}
\end{align}
\thmref{thm} follows from the following $L^p\to L^p$ diffusion estimate for $P_t$:
		\begin{theorem}[Diffusion estimate, general case] \label{thm1}
		For any $\k,\,\g>0$ and  non-empty measurable $(\k,\g)$-separated subsets $X,\,Y\subset \Om$, we have, for  all   $0< t <  \g d_{XY} /\k$,  $1\le p\le \infty$, 
		\begin{align}
			\label{122}
			\norm{\chi_X P_t\chi_Y}_{L^p(\Om)\to L^p(\Om)} \le   \exp\del{-\frac{(\g d_{XY}-\k t)^2}{ 4  t}}.
		\end{align}	
	\end{theorem}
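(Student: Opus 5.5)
We will use Davies' exponential twisting. Fix the function $\psi\in C_b^\infty(\bar\Om)$ from the $(\k,\g)$-separation, and for $\lambda>0$ set
\[
P_t^{\lambda}:=e^{\lambda\psi}P_te^{-\lambda\psi}.
\]
Since $\psi$ is bounded, this is a bounded operator on every $L^p$. Formally it is the propagator generated by
\[
L_\lambda=e^{\lambda\psi}Le^{-\lambda\psi}.
\]
Expanding, and writing $a,b$ for the coefficients of $L$,
\[
L_\lambda u=Lu-2\lambda\inn{a\grad\psi}{\grad u}+\Bigl(\lambda^2\inn{a\grad\psi}{\grad\psi}-\lambda\div(a\grad\psi)-\lambda\inn{b}{\grad\psi}\Bigr)u .
\]
Note that $-\lambda\div(a\grad\psi)$ is produced when the first-order term is written in divergence form.

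The key claim is the bound
\[
\norm{P_t^\lambda}_{p\to p}\le e^{(\lambda^2+\lambda\k)t}\quad\text{for all }1\le p\le\infty .
\]
Granting it, we conclude as follows. For $f$ supported in $Y$ we have $e^{-\lambda\psi}f\le e^{-\lambda\sup_Y\psi}|f|$ pointwise, and on $X$ we have $e^{\lambda\psi}\ge e^{\lambda\inf_X\psi}$. Hence
\[
\norm{\chi_XP_t\chi_Y}_{p\to p}\le e^{-\lambda\psi(X,Y)}\norm{P_t^\lambda}_{p\to p}\le \exp\bigl(-\lambda\g d_{XY}+\lambda^2t+\lambda\k t\bigr).
\]
Positivity preservation (P2) is used here, and in general we write $P_t=e^{-\lambda\psi}P_t^\lambda e^{\lambda\psi}$. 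Optimizing with $\lambda=(\g d_{XY}-\k t)/(2t)$, which is positive exactly when $t<\g d_{XY}/\k$, gives \eqref{122}.

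To prove the claim we proceed as follows.

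\emph{Case $p=1$.} Let $u_0\ge 0$ be smooth and compactly supported, and set $v(t)=P_t^\lambda u_0\ge0$. Formally,
\[
\frac{d}{dt}\int v=\int L_\lambda v .
\]
Integrating by parts with the Neumann conditions \eqref{BC}, the $\div(a\grad v)$ term vanishes. The term $\int\inn{b}{\grad v}+cv=\int(c-\div b)v$ is $\le0$ by \eqref{aCond}. The remaining terms combine to give the potential
\[
\lambda^2\inn{a\grad\psi}{\grad\psi}+\lambda\div(a\grad\psi)-\lambda\inn{b}{\grad\psi}\le\lambda^2+\lambda\k
\]
by \eqref{kCond}; integrating $-2\lambda\inn{a\grad\psi}{\grad v}$ by parts flips the sign of the divergence term, which is harmless since \eqref{kCond} bounds its absolute value. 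Gronwall then gives the $L^1$ bound.

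\emph{Case $p=\infty$.} We apply the same argument to the adjoint twisted propagator $e^{-\lambda\psi}P_t^*e^{\lambda\psi}$ acting on $L^1$. Its generator is $L^*$ twisted with $-\lambda$. There $\int L^*w=\int cw\le0$, since $c\le0$; this is exactly where $c\le 0$ enters, consistent with Remark \ref{rem22}(ii). The twisting terms are again bounded by $\lambda^2+\lambda\k$. Duality gives the $L^\infty$ bound.

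\emph{Intermediate $p$.} Riesz--Thorin interpolation yields the bound for all $1<p<\infty$ with the same exponent.

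The main obstacle is rigor: $a$ may degenerate, so $u$ has no energy-space regularity, and the formal computations above are not directly justified. I would first try working with the weak formulation \eqref{DS}, testing against $\varphi=e^{\pm\lambda\psi}\eta(t)$. This is admissible because $\grad\psi|_{\di\Om}=0$ and $\psi\in C_b^\infty$, though on $\Rb^n$ it needs a spatial cutoff, whose error vanishes since $a,b$ are bounded and $u\in L^1$. This yields the $L^1$ inequality directly from the time-integrated identity, without any gradient of $u$.

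If uniqueness of weak solutions is an issue, I would instead use the explicit construction \eqref{eq:luwot} of $P_t$ from the appendix, presumably a vanishing-viscosity or approximation limit. The bound is proved for the regularized uniformly parabolic propagators, with constants uniform in the regularization since \eqref{kCond} is stable under adding $\epsilon\Lap$ (adjusting $\k$ by $O(\epsilon)$). We then pass to the limit weakly using (P1).
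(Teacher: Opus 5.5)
Your overall plan matches the paper's proof: exponential twisting, an $L^1$ bound obtained by testing \eqref{DS} with $\theta(t)e^{\lambda\psi}$ (with a cutoff on $\Rb^n$, controlled by $\inn{a\grad\psi}{\grad\psi}\le1$ and $L^1$-contractivity), the $p=\infty$ endpoint from the adjoint, and Riesz--Thorin. Your twisted potential is exactly the paper's $Q$ with $\phi=\lambda\psi$, and your $p=1$ argument is complete.

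\textbf{The gap is at $p=\infty$.} You assert that $e^{-\lambda\psi}P_t^*e^{\lambda\psi}$ has generator ``$L^*$ twisted with $-\lambda$'' and rerun the testing argument. That argument, however, needs a trajectory $r\mapsto Q_rg$ with three properties: it is a positivity-preserving, $L^1$-contractive weak solution of some Cauchy problem, and it equals $P_t^*g$ at $r=t$. But $P_t^*$ is a single operator, and no such trajectory is given. Three things are missing:
\begin{itemize}
\item For non-autonomous $L$, the relevant generator is the \emph{time-reversed} operator $L^\sharp(r)=L(t-r)^*$, $0\le r\le t$, not $L^*$.
\item $L^\sharp$ must again satisfy \eqref{aCond}. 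Here $\div b^\sharp-c^\sharp=-c\ge0$, matching your formal computation, and $c^\sharp=c-\div b\le0$, so the two sign conditions trade places.
\item $P_t$ is only a subsequential weak limit of $P_t^{\eps}$, with neither uniqueness nor an evolution property. So its adjoint is not a priori the endpoint of any such trajectory.
\end{itemize}
The paper supplies exactly this in \lemref{lem:adjoint-compatible-viscosity}. It builds $Q_r$ as a limit of the viscous propagators of $L^\sharp+\eps_j\Lap$ along a further subsequence, and uses $Q_t^{\eps}=(P_t^{\eps})^*$ together with \eqref{eq:luwot} to conclude $Q_t=P_t^*$. Only then can \corref{cor33} be applied with $\phi=-\lambda\psi$.

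Your fallback would also close the gap, by a genuinely different route. You would prove $\norm{e^{\lambda\psi}P^{\eps}_te^{-\lambda\psi}}_{p\to p}\le e^{(\lambda^2+\lambda\k+O(\eps))t}$ for the uniformly parabolic problems. An $L^p$ energy estimate does this for all $p$ at once, with no duality, since its potential $\lambda^2\inn{a_\eps\grad\psi}{\grad\psi}+\lambda(\frac2p-1)\div(a_\eps\grad\psi)-\lambda\inn{b}{\grad\psi}+c-\frac1p\div b$ is bounded by $\lambda^2+\lambda\k+O(\eps)$. You would then let $\eps_j\to0$ in $\inn{\chi_XP^{\eps_j}_t\chi_Yf}{g}$. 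This avoids constructing a limiting adjoint propagator, with two caveats:
\begin{itemize}
\item The limit you need is \eqref{eq:luwot}, not \ref{P1}.
\item The error terms $\eps\lambda^2\abs{\grad\psi}^2+\eps\lambda\abs{\Lap\psi}$ require $\grad\psi$ and $\Lap\psi$ to be bounded. \eqref{kCond} does not give this where $a$ degenerates; it holds automatically on compact $\bar\Om$, or if $C_b^\infty$ is read as including bounded derivatives. The paper's argument uses only \eqref{kCond}.
\end{itemize}

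One small slip: in your final step the correct pointwise bound is $e^{\lambda\psi}\abs{f}\le e^{\lambda\sup_Y\psi}\abs{f}$ on $Y$. That step is plain H\"older, so positivity is not needed there.
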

	This theorem is proved in \secref{secPfThm1}.

	 {Informally, conditions \eqref{kCond}--\eqref{gCond} ask for a bounded smooth function $\psi$ that acts as a well-behaved distance separating $X$ from $Y$. Condition \eqref{gCond} requires  $\psi$ to be comparable to the geodesic distance, while the two bounds in \eqref{kCond} control, respectively, the slope of $\psi$ in the diffusive directions and its variation under the diffusion and drift. Such separation functions are standard in geometric analysis. In $\Rb^n$, they can be obtained by smoothly truncating the distance function to one of $X,\,Y$. On complete manifolds, Schoen--Yau constructed distance-like exhaustion functions with controlled gradient and Laplacian under a lower Ricci-curvature bound \cite[Thm.~4.2]{SY}; such functions with controlled Hessian are available under additional geometric assumptions \cite[Prop.~1.3]{RV}. Smooth truncations of these exhaustion functions provide the required separating functions.}

More specifically, we present classes of $(a,b,\Om,X,Y)$ satisfying the $(\k,\g)$-separation conditions. 
	Explicit constructions of  separation functions satisfying \eqref{kCond}--\eqref{gCond} for these examples are given in  Appendix \ref{secf}.
	\begin{itemize}
		\item 
		Let $\Om=\Rb^n$ and assume  \eqref{27} holds with $\al=1$.
		Then, for any $\delta>0$,
		there exist $\k=\k(n,\beta,\delta)$
		and $\g=\g(n)$ s.th.~any  disjoint non-empty   subsets $X,\,Y$ with $d_{XY}\ge\delta$  are $(\k,\g)$-separated.

		\item
		Let $(M,g)$ be a complete non-compact Riemannian manifold with bounded geometry, and let
		$\Om$ be a  smooth domain  in $M$. 
		Assume $	 \norm{a}_{L^\infty(\Om\times I)}\le 1,$ and $
\norm{\abs{b}+\abs{ \div a}}_{L^\infty({\Om\times I})}<\infty$.
		Then there exist $\k,\g>0$, s.th.~if $R>r>0$ and $R-r$ is sufficiently large, and $X,\,Y$ are  non-empty subsets $X,\,Y\subset\Om$ satisfying  
		$
		Y\subset\overline{B_r(o)}$ and $
		X\subset B_R(o)^\cp$ for some $o\in\Om$, with $d_{XY}=R-r$ and  $B_{R+2}(o)\subset\Om$, then $X,\,Y$ are
		$(\k,\g)$-separated.
		See \figref{figGeo} for an illustration. 	 
	\end{itemize}
	\begin{figure}[b]
		\centering
		\begin{tikzpicture}[scale=1.2]
			
			% Draw Omega boundary
			\draw[pink,scale=1.2, fill=pink]
			(-3.1,-1.2)
			.. controls (-3.3,1.6) and (-1.8,2.7) ..
			(1.8,2.5)
			.. controls (3.3,2.2) and (3.6,-1.8) ..
			(3.0,-2.1)
			.. controls (1.2,-2.5) and (-1.0,-2.3) ..
			(-2.5,-2.0)
			.. controls (-3.0,-1.8) and (-3.1,-1.5) ..
			cycle;
			\node at ( 2,2.3) {$\Omega$};
			
			\draw[white,scale=.9, thick,xshift=-.444cm,fill=white]
			(-2.1,-2.6)
			-- (-2.25,1.9)
			-- (-2.0,1.9)
			-- (1.7,2.4)
			-- (2.2,2.4)
			-- (2.5,-1.7)
			-- (2.1,-2.0)
			-- (1.0,-2.4)
			-- (1.8,-2.2)
			-- (2.4,-2.0)
			-- (2.05,-2.7)
			-- cycle;
			\node at (-3,-1.5) {$X$};
			
			% Draw the large ball B_{r+R}(y) as dashed circle
			\draw[thick, red!70,dashed] (0,0) circle (1.72);
			\node at (0,-2) {$ {B_{ R}(o)^\cp}$};
			
			% Draw the small ball B_r(y) as dashed circle
			\draw[thick, red!70,dashed] (0,0) circle (0.97);
			\node at (-1.2,0.3) {$\overline{B_r(o)}$};
			
			\draw[thick] (0,0) circle (0.01);
			\node at (-.1,-0.08) {$o$};
			
			% Draw blob region for Y inside B_r(y)
			\draw[green,fill=green!30, thick,xshift=.744cm] 
			(-1,0.1) .. controls (0,0.56) and (0.46,0.1) .. (0.14,-0.2)
			.. controls (-0.1,-0.4) and (-0.4,0.38) .. cycle;
			\node at (0.8,0) {$Y$};
			
			\draw[thick,<->,dashed] (1,0) -- (1.72,0);
			\node at (1.4,-.16) {$d_{XY}$};

		\end{tikzpicture}
		
		\caption{Schematic diagram for the geometric setup in the second example.
			Note that no regularity on the boundary of $X$ or $Y$ is  assumed.
		}
		\label{figGeo}
	\end{figure}

\begin{proof}[Proof of \thmref{thm} assuming \thmref{thm1}]
	
	 {First, we rescale time $t$ as $t\mapsto\al t$, and divide the coefficients $a,\,b,\,c$ in \eqref{Lgen} by $\al$ to arrive at the equation %
	\begin{align}
		\label{214a}
		\di_t u = L'u,\quad u :\Rb^n\times I_\al\to\Rb,
	\end{align} where $I_\al=\al I$ and, with $(a',b',c')=\frac1\al (a,b,c)$, 
\begin{align}
	\label{214b}
		 		L' u = \div (a'  \, \grad u ) + \inn{ b'}{\grad u}  +c' u .
\end{align} 
	Next, we apply \thmref{thm1} to \eqref{214a} with \eqref{214b}. 
By \propref{propB1}, since $(a',b')$ satisfy  conditions \eqref{27} with $\al=1$, any  disjoint non-empty  subsets $X,\,Y$ with $d_{XY}\ge\delta$  are $(\tilde \k,\g)$-separated. }  {Hence, it follows from \thmref{thm1}  in the Euclidean case with $(a,b,c,\k)\mapsto (a',b',c',\tilde\k)$ that
\begin{align}
			\norm{\chi_X P_t\chi_Y}_{L^p(\Rb^n)\to L^p(\Rb^n)} \le   \exp\del{-\frac{(\g d_{XY}-\k t)^2}{ 4\al  t}},\label{216}
\end{align}
 with $  \k:=\al\,\tilde \k\del{n,\frac{\beta}{\al},\delta}$ and 
for all $0<t< {\g d_{XY}}/{  \k}$. Finally, setting $\al'=\al/\g^2$ and $\k'=\k/\g$ in \eqref{216} gives \eqref{mainSp}. }\end{proof}

	 Next, assume $P_t$ has a classical integral kernel, i.e., $$P_t u(x)=\int p_t(x,y) u(y)\,dy$$ for some function $p_t\ge0$.
	 We prove:

	 \begin{corollary}[$L^1$-tail estimate for $p_t$]\label{cor:intbound}
			Assume   there exist $\k,\,\g>0$ s.th.~$ B_{\eps r}(x_0)$ and $B_{(1-\eps)r}(x_0)^\cp$ are $(\k,\g)$-separated for a.e.~$x_0\in\Om$, $r>0$, and $0<\eps<\frac13$. 
	 Then we have, for all   $0< t <  \g r /\k$ and a.e.~$x\in\Om$,
	 	\begin{align}
	 		\label{16}
	 		\int _{B_r(x)^\cp}{p_t(x,y)}\,dy\le  \exp\del{-\frac{(\g r-\k t)^2}{4t}}.
	 	\end{align} 
	 	
	 \end{corollary}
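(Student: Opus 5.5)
We derive \eqref{16} from \thmref{thm1} with $p=\infty$, after converting the kernel's $L^1$ tail into an $L^\infty$ operator norm by duality, and then shrinking to the point $x$ by a Lebesgue-differentiation argument.

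\emph{Reduction to an $L^\infty$ bound.} Fix $x_0\in\Om$, $r>0$ and $0<\eps<\frac13$. Set $X_\eps=B_{\eps r}(x_0)$ and $Y_\eps=B_{(1-\eps)r}(x_0)^\cp$. We apply \thmref{thm1} with the roles $X\mapsto X_\eps$, $Y\mapsto Y_\eps$; the hypothesis says exactly that these sets are $(\k,\g)$-separated. Their distance satisfies $d_{X_\eps Y_\eps}\ge (1-2\eps)r$. With $p=\infty$ this gives
\begin{align*}
\es_{x\in X_\eps}\int_{Y_\eps}p_t(x,y)\,dy=\norm{\chi_{X_\eps}P_t\chi_{Y_\eps}}_{\infty\to\infty}\le \exp\del{-\frac{(\g d_{X_\eps Y_\eps}-\k t)^2}{4t}}.
\end{align*}
The equality holds because $p_t\ge0$: the $L^\infty\to L^\infty$ norm of a positive integral operator equals the essential supremum of the row integrals, attained by testing on $\chi_{Y_\eps}$. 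One could equally apply \thmref{thm1} with $p=1$ to the adjoint, but $p=\infty$ is direct.

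\emph{Controlling the distance and the time range.} For $x\in X_\eps$ we have $B_r(x)^\cp\subset B_{(1-\eps)r}(x_0)^\cp=Y_\eps$, since $d(y,x_0)\ge d(y,x)-d(x,x_0)\ge r-\eps r$. Hence $\int_{B_r(x)^\cp}p_t(x,y)\,dy$ is bounded by the right-hand side above for a.e. $x\in B_{\eps r}(x_0)$. The main obstacle is that the exponent involves $\g d_{X_\eps Y_\eps}$, which is only $\ge\g(1-2\eps)r$, and we need $\g r$ in the end. There are two points to settle.

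\begin{enumerate}
\item The map $s\mapsto \exp(-(\g s-\k t)^2/4t)$ is decreasing for $s>\k t/\g$. So the lower bound $d_{X_\eps Y_\eps}\ge(1-2\eps)r$ may replace the exact distance, provided $t<\g(1-2\eps)r/\k$. By \eqref{gCond} this is also compatible with the range of validity $t<\g d_{X_\eps Y_\eps}/\k$ in \thmref{thm1}.
\item To recover $\g r$, we let $\eps\downarrow0$ along a countable sequence $\eps_k\downarrow 0$. For fixed $t<\g r/\k$, eventually $t<\g(1-2\eps_k)r/\k$.
\end{enumerate}

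\emph{Passing to a.e. $x$.} For each $k$ we cover $\Om$ by countably many balls $B_{\eps_k r}(x_j)$, using separability and choosing centers $x_j$ in the full-measure set where the hypothesis holds. On each ball, for a.e. $x$ in it, we get
\begin{align*}
\int_{B_r(x)^\cp}p_t(x,y)\,dy\le \exp\del{-\frac{(\g(1-2\eps_k)r-\k t)^2}{4t}}.
\end{align*}
A countable union of null sets is null, so this holds for a.e. $x\in\Om$ and all $k$ simultaneously. Letting $k\to\infty$, the right side tends to $\exp(-(\g r-\k t)^2/4t)$, which gives \eqref{16}.

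If one wants the statement for all $r$ at once rather than for fixed $r$, we additionally restrict to rational $r$. We then use that $r\mapsto\int_{B_r(x)^\cp}p_t\,dy$ is monotone and that the bound is continuous in $r$.
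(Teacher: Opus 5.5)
Your proposal is correct and follows essentially the same route as the paper. Like the paper, you apply Theorem~\ref{thm1} with $p=\infty$ to $X=B_{\eps r}(x_0)$ and $Y=B_{(1-\eps)r}(x_0)^\cp$, use $B_r(x)^\cp\subset B_{(1-\eps)r}(x_0)^\cp$ for $x\in B_{\eps r}(x_0)$, let $x_0$ range over a countable dense subset of the admissible full-measure set, and send $\eps\downarrow0$ along a sequence; your remarks on the monotonicity of the Gaussian factor in the distance and on identifying the $L^\infty\to L^\infty$ norm with the essential supremum of row integrals only make explicit steps the paper leaves implicit.
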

 
	 \begin{proof} 

	 			We follow the argument in \cite[Remark.~3.3]{GH}, with the geometric splitting illustrated in \figref{figDec}.  
	 			
	 		Take 	$X= B_{\eps r}(x_0)$ and $Y=B_{(1-\eps)r}(x_0)^\cp$, for some $x_0\in\Om$, $r>0$, and $0<\eps<\frac13$, such that $X,\,Y$ are $(\k,\g)$-separated. Then applying \eqref{122} with $p=\infty$ gives, for    $\k t<\g(1-2\eps)r$
	 		$$\sup_{x\in B_{\eps r}(x_0)} P_t\chi_{B_{(1-\eps)r}(x_0)^\cp}(x) \le \exp\del{-\frac{(\g(1-2\eps)r-\k t)^2}{4t}} .  $$
	 	For every  $x\in B_{\eps r}(x_0)$, we have $B_{r}(x)\supset B_{(1-\eps )r}(x_0)$ by the triangle inequality, and therefore
	 	$$	\int _{B_r(x)^\cp}{p_t(x,y)}\,dy\le 	\int _{B_{(1-\eps)r}(x_0)^\cp}{p_t(x,y)}\,dy=P_t\chi_{B_{(1-\eps)r}(x_0)^\cp}(x).$$
	 	Combining these  shows that  for a.e.~$x\in B_{\eps r}(x_0)$,
	 	$$\int _{B_r(x)^\cp}{p_t(x,y)}\,dy\le \exp\del{-\frac{(\g(1-2\eps)r-\k t)^2}{4t}}.$$
	 	Letting $x_0$ vary  in a countable dense subset of the admissible full-measure subset of $\Om$ yields the same inequality for a.e.~$x\in\Om$ and $\k t<\g(1-2\eps)r$. Finally, sending $\eps\downarrow0$ along a sequence gives \eqref{16}, as desired. 
	 \end{proof}

	\begin{figure}[h]

	\begin{tikzpicture}[scale=.8]

		% Draw the large ball B_{r+R}(y) as solid circle
		\draw[thick, red!70] (0,0) circle (4);
		\filldraw[thick] (0,0) circle (0.05);
		\node at (-0.18,0) {$x$};
		
		\draw[->] (0,0) -- (0,4) node[midway, left] {$r$};
		
		\draw[thick, red!70, dashed] (0.7,0) circle (1);
		\filldraw[thick] (0.7,0) circle (0.05);
		\node at (0.7,.18) {$x_0$};
		
		\draw[->] (0.7,0) -- (0.7,-3) node[midway, left] {$(1-\eps)r$};
		
		\draw[thick, dashed, red!70] (0.7,0) circle (3);

		% Draw arrow indicating distance R
		\draw[<->] (1.7,0) -- (0.7,0) node[midway, below] {$\eps r$};
	\end{tikzpicture}
	\caption{Schematic diagram for the geometric decomposition used to prove Corollary \ref{cor:intbound}. }
	\label{figDec}
\end{figure}
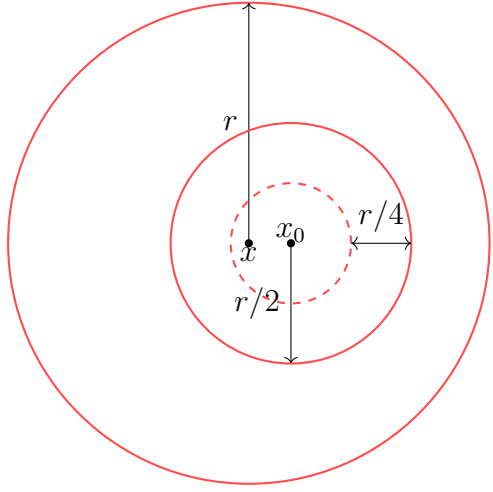

	\subsection{Nonlinear parabolic equations} \label{secExt}

	In this section, we show that our approach  encompasses a general class of nonlinear parabolic PDE, again allowing for  degeneracy in the strong sense that $a$ may vanish on an open set. 
	
	To fix ideas,  {we state the result in $L^1(\Rb^n)$}, in the time interval $I=(0,T)$ for some $0<T\le \infty$.
We consider the nonlinear degenerate parabolic equation  
	\begin{align}
		\label{nl-para-eq}		\begin{cases}
			\partial_t u = L(u) u, & \text{in }  \Rb^n\times I,\\
			u(\cdot, 0) = u_0, &
		\end{cases}
	\end{align} 
	where 	
	\begin{align}\label{Lu}
		L(u) =	  \div (a(u)  \, \grad (\cdot) ) + \inn{ b(u)}{\grad  (\cdot) }  +c(u)  ,
	\end{align}
and the coefficients $a(u)$, $b(u)$, and $c(u)$ are measurable functions of $(x,t)\in \Rb^n\times I$. 
		Given $u_0\in L^1(\Rb^n)$, we say $u$ is  an $L^1$-weak solution to \eqref{nl-para-eq}, if $u\in L^\infty(I,L^1(\Rb^n))$ and
\begin{align}
	\label{DSn}
	\int_I \int_{\Rb^n} u (\di_t+L(u)^*) \varphi\,dx\,dt+\int_{\Rb^n}u_0 \varphi(\cdot,0)\,dx =0, 
\end{align}
for all $ \varphi\in C_c^\infty(\Rb^n\times [0,T))$.

Given $\k,\,\g>0$ and a function $u:\Rb^n\times I\to\Rb$, we say $X,\,Y$ are $(\k,\g,u)$-separated  if there exists $\psi\in C_b^\infty(\Rb^n)$ s.th.~\eqref{gCond} holds and
\begin{align}
	&	\es_{\Rb^n\times I}\abs{\inn{a(u)\grad\psi}{\grad\psi}}\le1,\quad	 \es_{\Rb^n\times I}\del{\abs{\div(a(u)\grad\psi)}+\abs{\inn{b(u)}{\grad\psi}}}\le \label{kCondu}
	\k.
\end{align}
We prove the following:
	\begin{theorem}[Nonlinear diffusion bound in $\Rb^n$] \label{thm2}

		  {Let $u$ be a non-negative $L^1$-weak solution to \eqref{nl-para-eq}, with initial state $u_0\in L^1(\Rb^n)$, $u_0\ge0$, s.th.~$t\mapsto u(t)$ is weakly continuous in $L^1$.  Assume  
			the coefficients of $L(u)$ satisfy
			the following regularity conditions:
			\begin{align}
				\label{bCond}   
				\begin{cases}
					& a(u)\in L^\infty(I,W^{1,\infty} (\Rb^n)),\quad   \frac{b(u)}{1+\abs{x}}\in L^\infty(I, L^\infty (\Rb^n)),\\
					& \div b(u),\,c(u)\in L^\infty(I, L^\infty (\Rb^n)),\\
						&a (u)\ge 0, \quad  {\div b(u)  -c(u) \ge 0},\quad   c(u)  \le0,\quad \text{a.e.~in }\Rb^n\times I.
				\end{cases}
			\end{align} 
			} 
			Then, for any $\k,\,\g>0$ and disjoint non-empty measurable $(\k,\g,u)$-separated subsets $X,\,Y\subset \Rb^n$, we have, for $u_0$ supported in $Y$ and all $t\in I$ with $\k t <  \g d_{XY}$,
			\begin{align}
				\label{123}
				\norm{u(t)}_{L^1(X)} \le   \exp\del{-\frac{(\g d_{XY}-\k t)^2}{ 4 t}}\norm{u_0}_{L^1(Y)}. 
			\end{align}

	\end{theorem}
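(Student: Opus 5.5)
The plan is to freeze the coefficients along the given solution and reduce to a linear weighted $L^1$ estimate à la Davies. Set $\tilde a:=a(u)$, $\tilde b:=b(u)$, $\tilde c:=c(u)$, regarded as fixed measurable functions of $(x,t)$. By \eqref{DSn}, $u$ is then an $L^1$-weak solution of the \emph{linear} problem $\di_t u=\tilde L u$ with $\tilde L=\div(\tilde a\grad\cdot)+\inn{\tilde b}{\grad\cdot}+\tilde c$. We cannot simply invoke \propref{prop1} and \thmref{thm1}: $\tilde b$ is only bounded by $1+\abs{x}$ rather than in $W^{1,\infty}$, and we would need $u=P_tu_0$, i.e.\ uniqueness of weak solutions. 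So the argument works directly on $u$ with a twisted weight, using only the weak formulation. Since $u\ge0$, $\norm{u(t)}_{L^1(X)}=\int \chi_X u(t)$, which is a duality pairing, and this is what makes the $p=1$ case tractable.

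Let $\psi$ be the separation function from the $(\k,\g,u)$-separation, and take $\lambda>0$ to be chosen. Consider $F(t):=\int e^{\lambda\psi}u(t)\,dx$. Formally, testing \eqref{DSn} with $\varphi=e^{\lambda\psi}\eta(t)$ gives
$$\tfrac{d}{dt}F=\int u\,\tilde L^*e^{\lambda\psi}=\int u\,e^{\lambda\psi}\bigl(\lambda\div(\tilde a\grad\psi)+\lambda^2\inn{\tilde a\grad\psi}{\grad\psi}-\lambda\inn{\tilde b}{\grad\psi}-\lambda\,\div\tilde b\cdot 0+\dots\bigr).$$
More precisely, $\tilde L^*\varphi=\div(\tilde a\grad\varphi)-\inn{\tilde b}{\grad\varphi}+(\tilde c-\div\tilde b)\varphi$, and $\tilde c-\div\tilde b\le0$ by \eqref{bCond}. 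Using \eqref{kCondu} and $u\ge0$, we get $\tilde L^*e^{\lambda\psi}\le(\lambda^2+\k\lambda)e^{\lambda\psi}$, hence $F'(t)\le(\lambda^2+\k\lambda)F(t)$ and $F(t)\le e^{(\lambda^2+\k\lambda)t}F(0)$. After normalizing $\psi$ so that $\sup_Y\psi=0$, we have $F(0)\le\norm{u_0}_{L^1(Y)}$ because $\supp u_0\subset Y$. Also $\int_X u(t)\le e^{-\lambda\inf_X\psi}F(t)\le e^{-\lambda\g d_{XY}}F(t)$ by \eqref{gCond}. Combining, $\norm{u(t)}_{L^1(X)}\le\exp(\lambda^2t+\lambda\k t-\lambda\g d_{XY})\norm{u_0}_{L^1(Y)}$. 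Optimizing with $\lambda=(\g d_{XY}-\k t)/(2t)>0$, which is admissible exactly when $\k t<\g d_{XY}$, yields \eqref{123}.

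The main obstacle is justifying the test function, since $e^{\lambda\psi}\eta(t)$ is bounded and smooth but not compactly supported in $x$, and the drift $\tilde b$ grows linearly. I would insert a spatial cutoff $\chi_R(x)=\chi(x/R)$ and a time cutoff $\eta_\epsilon$ approximating $\one_{[0,t]}$, using weak $L^1$-continuity of $u$ to recover $F(t)$ and $F(0)$ in the limit $\epsilon\to0$. The error terms from derivatives of $\chi_R$ are $\int u\,e^{\lambda\psi}\bigl(\abs{\tilde a}\abs{\grad\chi_R}\abs{\grad\psi}+\abs{\div\tilde a}\abs{\grad\chi_R}+\abs{\tilde a}\abs{\grad^2\chi_R}+\abs{\tilde b}\abs{\grad\chi_R}\bigr)$. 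Since $\abs{\grad\chi_R}\lesssim R^{-1}\one_{R<\abs{x}<2R}$ and $\abs{\tilde b}\lesssim1+\abs{x}$, the drift term is bounded by $C\int_{\abs{x}>R}u$, which tends to $0$ by dominated convergence since $u\in L^\infty(I,L^1)$ and $e^{\lambda\psi}$ is bounded. This is precisely why the growth condition $\tilde b/(1+\abs{x})\in L^\infty$ is imposed. One also needs $\grad^2\psi$ bounded, which holds for $\psi\in C_b^\infty$. Rather than a differential inequality, I would phrase the conclusion as an integral (Gronwall) inequality by testing with $e^{-(\lambda^2+\k\lambda)s}e^{\lambda\psi}\chi_R\eta_\epsilon(s)$; the weight's time derivative then absorbs the growth, and the sign conditions give $\int u(t)e^{\lambda\psi}e^{-(\lambda^2+\k\lambda)t}\le\int u_0e^{\lambda\psi}$ after letting $R\to\infty$ and $\epsilon\to0$.
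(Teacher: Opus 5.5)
Your proposal is correct and follows essentially the same route as the paper. Like the paper, you treat $a(u),b(u),c(u)$ as fixed coefficients and test \eqref{DSn} with $\theta(t)e^{\mu\psi}\chi_R$. You bound the main term by $\mu^2+\mu\k$ using $\div b(u)-c(u)\ge0$, \eqref{kCondu} and $u\ge0$. You remove the cutoff errors via $b(u)/(1+\abs{x})\in L^\infty$ and $\int_{R\le\abs{x}\le 2R}u(t)\,dx\to0$. You then close with Gr\"onwall and weak continuity (you via an integrating factor in the test function, the paper via a distributional inequality) and optimize in $\mu$.

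One small refinement: the paper bounds the cross term $\mu\inn{a(u)\grad\psi}{\grad\chi_R}$ by the Cauchy--Schwarz inequality for the semi-definite form $a(u)$ together with the first bound in \eqref{kCondu}. So, unlike your version, it needs no global bounds on $\grad\psi$ or $\grad^2\psi$ beyond \eqref{kCondu}.
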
 
	
	Theorem \ref{thm2} is proved in Section \ref{sec42}. 
	
		To obtain bounds for an admissible class of subsets independent of $u $, one can use a priori estimates on $a(u)$ and $b(u)$.  We illustrate this with an application to the McKean-Vlasov equation in \secref{sec:examples}.

		\section{Proof of \thmref{thm1}}\label{secPfThm1}

		The proof is organized as follows: In the Sects.~\ref{sec31}--\ref{sec22}, we prove \thmref{thm1} for $p=1$. 
		In \secref{sec23}, we conclude  \thmref{thm1}  by  a duality argument and interpolation. 
		
			\subsection{Key relation}\label{sec31}
			
			Let $\phi:\Om\to \Rb$  be a   signed  cutoff function adapted to the geometry of $X$ and $Y$, to be determined later. We introduce the exponential tilting multiplication operator
			\begin{equation}\label{Tdef}
				T: u(x)\mapsto e^{\phi(x)}u(x).
			\end{equation}
			Clearly, $T$ is invertible and gives 	exponential  weight adapted to $X$ and $Y$.
			
		Next, we write, for any $u\in L^1$, 
			\begin{align}
				\label{splitting}
				\chi_X P_t\chi_Yu= \chi_X T^{-1}     [(T P_tT^{-1})   T \chi_Y u],
			\end{align} which leads, through H\"older's inequality, to the rough bound
			\begin{align}\label{eq:a1}
				\norm{\chi_X P_t\chi_Y}_{L^1\to L^1}\le\norm{\chi_X T^{-1} }_{L^\infty} \norm{ T P_tT^{-1}}_{L^1\to L^1} \norm{ T \chi_Y }_{L^\infty}.
			\end{align}
		 {	In the next two subsections, we will estimate the terms on the r.h.s. The second term in the r.h.s.~leads to an exponential growth in $t$, 
			and the first and last geometrical   terms, to exponential decay in $d_{XY}$. }

			\subsection{Bound on deformed propagator}\label{sec32}

			 Let $L$ be as in \eqref{Lgen}.
			Given a twice differentiable function $\phi$, we define, with    $(\cdot)_+=\max(\cdot,0)$, \begin{align}
				\label{Adef}
				A_L[\phi](t):=\es_{x\in \Om}
				\del{\inn{a\nabla \phi}{\nabla \phi}+ 
				\div(a\nabla \phi)-\inn{b}{\grad \phi}}_+.
			\end{align}
			The main result in this section is the following   $L^1\to L^1$ estimate for the deformed propagator:
			\begin{proposition}\label{prop16}
							Let \eqref{aCond} hold. Let $\phi\in C_b^\infty(\bar\Om)$. Assume $t\mapsto A_L[\phi](t)$ is locally integrable, and $  
							  \norm{\inn{a\grad\phi}{\grad\phi}}_{L^\infty(\Om\times (0,\infty))}<\infty$.
							Then, for all non-negative $v\in L^1(\Om)$   and  {$t>0$,} 
				\begin{align}
					\label{estGen}
					&\int_\Om e^\phi P_t e^{-\phi} v  
					\le  \exp\del{\int_0^t A_L[\phi](\tau)\,d\tau} \int_\Om v.
				\end{align}
	
			\end{proposition}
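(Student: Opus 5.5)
The plan is to prove \eqref{estGen} by duality: test the weak formulation \eqref{DS} against the function $e^{\phi}$, suitably cut off in space and time, and close with a Gr\"onwall argument. Fix $v\ge0$ in $L^1$ and set $u(t)=P_te^{-\phi}v$. Since $\phi$ is bounded, $e^{-\phi}v\in L^1$ and $e^{-\phi}v\ge0$. By \ref{P2} we then have $u\ge0$, and by \ref{P3} $u$ is an $L^1$-weak solution. The quantity to control is $F(t):=\int_\Om e^{\phi}u(t)$. Formally,
\begin{align*}
F'(t)=\int_\Om u\,L^*e^{\phi}=\int_\Om u\, e^{\phi}\Big(\inn{a\grad\phi}{\grad\phi}+\div(a\grad\phi)-\inn{b}{\grad\phi}-\div b+c\Big).
\end{align*}
This uses $L^*e^\phi=\div(ae^\phi\grad\phi)-\div(be^\phi)+ce^\phi$. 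By \eqref{aCond} we have $c-\div b\le0$, and $u\ge0$, so $F'(t)\le A_L[\phi](t)F(t)$. Gr\"onwall then gives \eqref{estGen}, since $F(0)=\int v$.

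To make this rigorous I will take test functions $\varphi(x,t)=\eta(t)\chi_R(x)e^{\phi(x)}$. Here $\eta\in C_c^\infty([0,\infty))$ will approximate $\exp(-\int_0^\tau A_L)\one_{[0,t]}$. Because $A_L[\phi]$ is only locally integrable, I will first mollify $A_L$ in time, or equivalently use an integrated weight $\eta(\tau)=\theta(\tau)\exp(-\int_0^\tau \tilde A)$ with $\tilde A\ge A_L$ smooth and approximating it in $L^1_{loc}$. The factor $\chi_R$ is a spatial cutoff, needed only when $\Om=\Rb^n$ or $\Om$ is unbounded. The boundary condition $\inn{a\grad\varphi}{\nu}=0$ holds because $\grad\phi|_{\di\Om}=0$ by the definition of $C_b^\infty$, and $\chi_R$ can be chosen so that it also satisfies this condition near the boundary.

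Inserting $\varphi$ into \eqref{DS} yields $\int_0^\infty\int_\Om u\,e^\phi\chi_R(\eta'+\eta\,G)\,dx\,d\tau+\int\chi_R v\,\eta(0)=0$, up to error terms containing $\grad\chi_R$. Here $G$ is the bracket above, so $G\le A_L\le\tilde A$, and $\eta'=-\tilde A\eta$ on $[0,t]$. Hence $\eta'+\eta G\le0$ on $[0,t]$. The remaining cutoff $\theta'$ then isolates $\exp(-\int_0^t\tilde A)\int e^\phi u(t)\chi_R$. Taking $\theta\to\one_{[0,t]}$ requires the weak continuity \ref{P1}. Strictly, \ref{P1} gives continuity in $L^2$, so for $L^1$ data I will pass through $v\in L^1\cap L^2$ and use density together with the $L^1$ contraction.

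The main obstacle is the set of error terms from $\chi_R$. These are $u e^\phi\eta(\inn{a\grad\phi}{\grad\chi_R}\cdot 2+\div(a\grad\chi_R)-\inn{b}{\grad\chi_R})$, up to the precise form of the expansion. With $|\grad\chi_R|\lesssim1/R$, $|\nabla^2\chi_R|\lesssim R^{-2}$, bounded $a,\grad a,b$, and $\phi$ bounded, these are $O(1/R)\int_0^t\|u\|_1$. That quantity is finite because $u\in L^\infty(I,L^1)$, so the errors vanish as $R\to\infty$. The assumption $\inn{a\grad\phi}{\grad\phi}\in L^\infty$ ensures that all terms in $G$ except the positive part are integrable against $u$. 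In particular, the negative parts of $G$ do not cause trouble because they carry the favorable sign. Finally, letting $\tilde A\downarrow A_L$ in $L^1(0,t)$ gives \eqref{estGen}.
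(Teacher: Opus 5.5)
Your argument is essentially the paper's proof. You test \eqref{DS} with $e^{\phi}$ times a time weight and, on $\Rb^n$, a cutoff $\chi_R$. You use $e^{-\phi}L^*e^{\phi}\le A_L[\phi]$ together with $P_te^{-\phi}v\ge0$, show the $\chi_R$-errors are $O(1/R)$, and close with Gr\"onwall.

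The only structural difference is that you build the integrating factor $\exp(-\int_0^\tau\tilde A)$ into the test function. The paper instead first proves the distributional inequality \eqref{Mest} and then invokes a distributional Gr\"onwall lemma. Your passage to every $t$ is more explicit than the paper's one-line appeal to \ref{P1}, which by itself only concerns $L^2$ pairings. You pair $u(t)$ with the $L^2$ function $\chi_Re^{\phi}$ for $v\in L^1\cap L^2$, then use density and $L^1$-contractivity.

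Two details need fixing.

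\emph{The smooth majorant.} A smooth $\tilde A\ge A_L[\phi]$ approximating $A_L[\phi]$ in $L^1_{\mathrm{loc}}$ need not exist, because the coefficients are only measurable in $t$. Take $a=c=0$, $b=-\one_U(t)e_1$, $\sup_x(\di_1\phi)_+=1$, and $U\subset(0,t)$ open, dense, of measure $<\eps$. Then $A_L[\phi]=\one_U$, and every continuous a.e.-majorant of $\one_U$ is $\ge1$ on all of $[0,t]$. A mollification of $A_L[\phi]$ is not a majorant in general, so that alternative does not help as stated either. There are two easy repairs.
\begin{enumerate}[label=(\alph*)]
\item Keep the mollification $\tilde A_\delta$ and carry the extra term $\int\eta_\delta F_R\,(A_L[\phi]-\tilde A_\delta)_+\,d\tau$, where $F_R(\tau)=\int_\Om\chi_Re^{\phi}u(\tau)$. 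This term is at most $e^{2\norm{\phi}_\infty}\norm{v}_1\norm{A_L[\phi]-\tilde A_\delta}_{L^1(\supp\theta)}$, which tends to $0$.
\item Extend \eqref{DS} by density to test functions $\eta(\tau)\chi_R(x)e^{\phi(x)}$ with $\eta\in W^{1,1}$ compactly supported in $[0,\infty)$, and take $\tilde A=A_L[\phi]$ itself.
\end{enumerate}

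\emph{The cross term.} Boundedness of $\phi$ does not control the cross term $\inn{a\grad\phi}{\grad\chi_R}$, since $\grad\phi$ need not be bounded. This is exactly where the hypothesis $\inn{a\grad\phi}{\grad\phi}\in L^\infty$ enters, via $\abs{a\grad\phi}\le\norm{a}_\infty^{1/2}\inn{a\grad\phi}{\grad\phi}^{1/2}$. It is not needed to make $G$ integrable against $u$, which is automatic on $\supp\chi_R$.
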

\begin{proof}
				Let
\begin{align}
	\label{wDef}
			\tilde u(t)
			:=P_te^{-\phi}v,
			\quad
			w(t):=e^\phi \tilde u(t),
			\quad
			M(t):=\int_\Omega w(t)\,dx.
\end{align}
		We claim	$M'(t)\leq A_L[\phi](t)M(t)$ in distributional sense, i.e.,
			\begin{equation}
				\label{Mest}
				-\int_0^\infty M(t)\theta'(t)\,dt
				\leq M(0)\theta(0)+
				\int_0^\infty A_L[\phi](t)M(t)\theta(t)\,dt,
			\end{equation}
			for every non-negative \(\theta\in C_c^\infty([0,\infty))\).
			
 {Assuming \eqref{Mest} holds, then it follows from  the distributional Gr\"onwall's inequality that \eqref{estGen} holds for a.e.~$t>0$. The continuity result \ref{P1} of \propref{prop1} then gives \eqref{estGen} for all $t>0$. }

			Thus it remains to prove \eqref{Mest}. 
By \ref{P3} of \propref{prop1},  $\tilde u$ is a $L^1$-weak solution to \eqref{PE1}--\eqref{BC} with initial condition $\tilde u_0=e^{-\phi}v$. Explicitly, we have
			\begin{equation}
				\label{eq:u-epsilon-weak}
				\int_0^\infty\!\!\int_\Omega
				\tilde u(\partial_t+L^*)\varphi\,dx\,dt
				+\int_\Omega e^{-\phi}v\,\varphi(\cdot,0) \,dx=0,
			\end{equation}
		for all $ \varphi\in C_c^\infty(\overline \Om\times [0,\infty))$ with $\inn{a\grad \varphi }{\nu}\vert_{\di\Om}=0$ if $\di\Om\ne\emptyset$, where
			\[
			L^*\varphi
			=\operatorname{div}(a\nabla\varphi)
			-\operatorname{div}(b\varphi)+c\varphi.
			\]

			Consider first the Riemannian case, where \(\bar \Omega\) is a compact submanifold with a smooth boundary.
			Choose
			$
			\varphi(x,t)=\theta(t)e^{\phi(x)}.
			$
			The assumption
			\(\nabla\phi=0\) on \(\partial\Omega\) makes $\varphi$ an admissible test
			function. Hence, testing \eqref{eq:u-epsilon-weak} with this $\varphi$ and using  definition \eqref{wDef} yields  
			\begin{align}
				\label{Meq}
				-\int_0^\infty M(t)\theta'(t)\,dt
				=M(0)\theta(0)+\int_0^\infty\theta(t)\int_\Omega
				w(t)e^{-\phi}L^*e^\phi\,dx\,dt.
			\end{align}
			We compute, for smooth $\phi$, 
			\begin{equation}
\notag
				e^{-\phi}L^*e^\phi
				=Q+c-\operatorname{div}b,
			\end{equation}
			where 
						\begin{equation}
\notag
							Q
								:=\langle a\nabla\phi,\nabla\phi\rangle
								+\operatorname{div}(a\nabla\phi)
								-\langle b,\nabla\phi\rangle.
							\end{equation}
			By definition \eqref{Adef}, we have $A_L[\phi](t)=\es_{x\in \Om}(Q)_+$. This, together with the assumption  \(\operatorname{div}b-c\geq0\) in \eqref{aCond}, implies
			\begin{align}
				Q+c-\operatorname{div}b
				\leq A_L[\phi],\quad \text{a.e.~in }\Om\times (0,\infty).
\notag
			\end{align}
		 {By the positivity-preserving property  \ref{P2} of \propref{prop1}, we have  \(w\geq0\). Hence, }combining the above  yields, for a.e.~$t>0$,
			\begin{align}
				\int_\Omega
				w(t) e^{-\phi}L^*e^\phi\,dx
				&\leq A_L[\phi](t)M(t).
				\label{eq:Q-integrated-bound}
			\end{align}
			Lastly, since \(\theta\geq0\), plugging \eqref{eq:Q-integrated-bound} back to  
			\eqref{Meq} 
			 gives
			\eqref{Mest}, as desired.

		Next, consider the Euclidean case  \(\Omega=\mathbb R^n\). Then  the function \(e^\phi\) is  no longer compactly supported, and thus $\varphi=\theta e^\phi$ is not an admissible test function. To overcome this, we  choose a sequence of cutoff functions \(\chi_R\in C_c^\infty(\mathbb R^n)\), $R\ge1$, satisfying
\begin{align}
		\label{chiDef}	\begin{cases}
				&0\leq\chi_R\leq1,
			\qquad
			\chi_R=1\text{ in }B_R,
			\qquad
			\chi_R=0\text{ in }B_{2R}^\cp,\\
			&	|\nabla\chi_R|\leq\frac{C}{R},
			\qquad
			|\grad ^2\chi_R|\leq\frac{C}{R^2}.
		\end{cases}
\end{align}
			Put
			\[
			M_R(t):=\int_{\mathbb R^n}\chi_Rw(t)\,dx.
			\]
We compute
			\begin{align}
				e^{-\phi}L^*(e^\phi\chi_R)
				={}&\chi_R
				\bigl(Q
				+c-\operatorname{div}b\bigr)+D_{R},\notag\\
				D_{R}:=&\operatorname{div}(a\nabla\chi_R)
				+\langle2a\nabla\phi-b,
				\nabla\chi_R\rangle.\notag
			\end{align}
		Since  $0\le \chi_R\le 1$, we have $$e^{-\phi}L^*(e^\phi\chi_R)\le A_L[\phi] +D_{R},\quad \text{a.e.~in }\Om\times (0,\infty).$$
		Now choose $
		\varphi(x,t)=\theta(t)e^{\phi(x)}\chi_R(x)$, which is an admissible test function. 
			 {Since $w\ge0$,} testing \eqref{eq:u-epsilon-weak} with this $\varphi$  gives
			\begin{align}
				\label{eq:M-R-identity}
				&-\int_0^\infty M_R(t)\theta'(t)\,dt-M_R(0)\theta(0)\notag\\
				={}&\int_0^\infty\theta(t)\int_\Omega
				w(t)	e^{-\phi}L^*(e^\phi\chi_R)\,dx\,dt.\notag\\
				\le& \int_0^\infty \theta(t) \del{A_L[\phi](t) M(t)+ E_{R}(t)}\,dt,
			\end{align}
			where  $
				E_{R}(t)
				:=\int_{\mathbb R^n}w
				D_{R}dx. $  
						
						We now derive \eqref{Mest} from \eqref{eq:M-R-identity}.
 First, we note $ M_R(0)\to M(0)$ as $R\to\infty$.  We claim \begin{enumerate}[label=(\roman*)]
 	\item $E_R(t)\to 0$ for a.e.~$t$ as $R\to\infty$; \item $\abs{\theta (t) E_R(t)}$ is  bounded by some function in  $L^1(0,\infty)$ for a.e.~$t$ and all $R\ge1$. 
 \end{enumerate}Once these claims are proved, then it follows from  the  dominated convergence theorem that, as $R\to\infty$, 
				\[
				\int_0^\infty|\theta(t)E_{R}(t)|\,dt\to0 .
				\]
				Sending \(R\to\infty\) in
				\eqref{eq:M-R-identity} gives \eqref{Mest}, as desired.

It remains to prove the claims above. Using \eqref{chiDef} and H\"older's inequality, we find  for a.e.~$t>0$ that
			\begin{align}
				|E_{R}(t)|
				\leq C\bigg(&
				\frac{\|\nabla a(t)\|_\infty}{R}
				+\frac{\|a(t)\|_\infty}{R^2}
				+\frac{\|a(t)\|_\infty^{1/2}
					\|\inn{a\grad\phi}{\grad\phi}\|_\infty^{1/2}}{R}\notag\\
				&\hspace{2.5cm}
				+\frac{\norm{b(t)}_\infty}{ R}
				\bigg)M(t).
\notag
			\end{align}
This, together with the boundedness of coefficients in \eqref{aCond} and the assumption $	\|\inn{a\grad\phi}{\grad\phi}\|_{L^\infty(\Om\times (0,\infty))}<\infty$, implies $\lim_R E_{R}(t)=0$ for a.e.~$t$. This proves (i).
Next,  recall $\phi$ is bounded, $v\in L^1$,  and $P_t$ is contractive on $L^1$ by \ref{P2}. Hence,  we have  $M(t)\le C \norm{v}_{L^1}$ for all $t>0$. Since, moreover, $\theta$ is bounded, we have $\abs{\theta (t) E_R(t)}\le C \norm{v}_{L^1} \1_{\supp \theta}(t)$ for a.e.~$t>0$ and some $C$ depending only on $\phi,\,\theta,$ and the coefficients of $L$. Since $\supp\theta$ is compact, this provides the desired dominating function, and so (ii) is proved.
\end{proof}

			\begin{corollary}\label{cor33}
				Let \eqref{aCond} hold, and
				let $\phi$ and  $A_L[\phi]$ be as in \propref{prop16}. Then, for  all $t>0$, 
\begin{align}\label{TPT}
						\norm{ e^\phi P_t e^{-\phi}}_{L^1\to L^1} \le \exp\del{\int_0^t A_L[\phi](\tau)\,d\tau} .
\end{align}
			\end{corollary}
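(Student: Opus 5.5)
The plan is to deduce \eqref{TPT} from \propref{prop16} by splitting a general $v\in L^1$ into its positive and negative parts and using positivity. Write $S_t:=e^\phi P_t e^{-\phi}$. Since $\phi\in C_b^\infty(\bar\Om)$ is bounded, multiplication by $e^{\pm\phi}$ is bounded on $L^1$. By \ref{P2}, $P_t$ extends to a contraction on $L^1$, so $S_t\in\cB(L^1)$ and the estimate only has to be checked on $L^1$ functions.

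\emph{Step 1: non-negative data.} Take $v\ge0$. Then $e^{-\phi}v\ge0$, and \ref{P2} says $P_t$ is positivity-preserving, so $S_tv\ge0$. Hence $\norm{S_tv}_{L^1}=\int_\Om e^\phi P_te^{-\phi}v$. By \eqref{estGen} this is at most $\exp\del{\int_0^tA_L[\phi](\tau)\,d\tau}\norm{v}_{L^1}$.

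\emph{Step 2: general data.} For arbitrary $v\in L^1$, write $v=v_+-v_-$ with $v_\pm\ge0$ and $\norm{v}_{L^1}=\norm{v_+}_{L^1}+\norm{v_-}_{L^1}$. By linearity and the triangle inequality,
\begin{align*}
\norm{S_tv}_1\le \norm{S_tv_+}_1+\norm{S_tv_-}_1\le \exp\del{\int_0^tA_L[\phi](\tau)\,d\tau}\del{\norm{v_+}_1+\norm{v_-}_1}.
\end{align*}
The right-hand side equals $\exp\del{\int_0^tA_L[\phi](\tau)\,d\tau}\norm{v}_1$. Taking the supremum over $\norm{v}_1\le1$ gives \eqref{TPT} for every $t>0$.

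\emph{Possible obstacle.} The one point needing care is that positivity preservation and linearity are available for the $L^1$ extension of $P_t$, not only on $L^1\cap L^2$. If \ref{P2} is read only on $L^1\cap L^2$, I would first prove the bound on the dense subspace $L^1\cap L^2$, where Steps 1 and 2 go through verbatim since the splitting $v=v_+-v_-$ preserves $L^1\cap L^2$. I would then extend to all of $L^1$ by density, using that $S_t$ is bounded on $L^1$. No other difficulty is expected; the substantive analysis is already contained in \propref{prop16}.
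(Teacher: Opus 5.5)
Your proposal is correct and is essentially the paper's proof: split $v=v_+-v_-$, apply \eqref{estGen} to each part, and use that $P_t$ is positivity-preserving to identify $\int_\Om e^\phi P_te^{-\phi}v_\pm$ with $\norm{e^\phi P_te^{-\phi}v_\pm}_1$. The triangle inequality and $\norm{v}_1=\norm{v_+}_1+\norm{v_-}_1$ make explicit what the paper leaves implicit. The density caveat is not needed, since \propref{prop16} already applies to every non-negative $v\in L^1(\Om)$.
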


			\begin{proof}
		Take any $v\in L^1(\Om)$. Decomposing it as $v = v_+ - v_-$ with $v_\pm \ge0$, applying \eqref{estGen} respectively to $v_+$ and $v_-$, and using that   $P_t$ is positivity-preserving, we conclude 
							\eqref{TPT}. 
			\end{proof}

		\subsection{Proof of \thmref{thm1} for $p=1$}\label{sec22}

Step 1. 
Let $X,\,Y$ be $(\k,\g)$-separated subsets, and 
	let $\phi$ and  $A_L[\phi]$ be as in \propref{prop16}.
For $T$ given by \eqref{Tdef} and $X,\,Y\subset\Om$, we have
$$	\chi_X T^{-1} \le   e^{-\inf_X\phi}\chi_X,\quad
T \chi_Y\le    e^{\sup_Y\phi}\chi_Y.$$
These, together with \eqref{eq:a1}, \eqref{TPT}, and the definition of $\phi(X,Y)$ in \eqref{psiXY}, imply, for all $t>0$, 
\begin{align} \label{318}
	\norm{\chi_X P_t\chi_Y}_{L^1\to L^1}\le \exp\del{-\phi(X,Y)+\int_0^t A_L[\phi](\tau)\,d\tau} .
\end{align}

Step 2. 
Now  let $\psi\in C_b^\infty(\bar\Om)$ be a function satisfying \eqref{kCond}, and choose $\phi=\mu\psi$ for some $\mu>0$ to be determined. Then, owing to \eqref{kCond}, the functional $A_L$ from \eqref{Adef} is bounded for a.e.~$t>0$  as, \begin{align}\label{Aest}
A_L[\mu\psi](t)=&\es_{x\in \Om}
\sbr{\mu^2\inn{a\nabla \psi}{\nabla \psi}+ 
	\mu \del{\div(a\nabla \psi)-\inn{b}{\grad \psi}}}_+\notag\\
	\le& \mu^2  +  \mu \k.
\end{align}
Combining \eqref{318}--\eqref{Aest} and using the definition \eqref{psiXY} gives
\begin{align}\label{321b}
	\norm{\chi_X P_t\chi_Y}_{L^1\to L^1}\le e^{-G_1(\psi,\mu,t)},
\end{align}
where 
$$G_1(\psi,\mu,t)=-\mu^2 t + \mu  ({\psi(X,Y)-\k t}).$$
For $\psi$ satisfying the lower bound \eqref{gCond}, estimate \eqref{321b} gives  
\begin{align}
\notag
G_1(\psi,\mu,t)\ge 	G_2(\mu,t):=-\mu^2 t + \mu  ({\g d_{XY}-\k t}).
\end{align}
For fixed $t$, the function $G_2(\mu,t)$ is monotone decreasing in $\mu$ unless
$$\g d_{XY} > \k t.$$
In this ballistic time interval,  maximizing $G_2(\mu,t)$ over $\mu$ yields 
\begin{align}
	G_*(t):=\sup_{\mu>0} G_2(\mu,t) =  	\frac{({\g d_{XY}-\k t})^2}{4 \,t},\notag
\end{align}
attained at $\mu_*=\frac{\g d_{XY}-\kappa t}{2t}$. Since the l.h.s.~of \eqref{321b} is independent of $\mu$ and $\psi$, this  gives
the  bound \eqref{122}  for $p=1$. 
\qed

		\subsection{Completing the proof of \thmref{thm1} }\label{sec23}

		Estimate \eqref{122} has already been proved for $p=1$.  We next prove the
		endpoint $p=\infty$ by applying the $p=1$ estimate to the time-reversed
		adjoint equation, and then interpolate.

		Fix $t>0$. Applying the $L^1$--$L^\infty$ duality, we find 
		\begin{align}
			\label{dual}
			\|\chi_XP_t\chi_Y\|_{\infty\to\infty}=\|\chi_YP_t^*\chi_X\|_{1\to1}.
		\end{align}
		By \lemref{lem:adjoint-compatible-viscosity}, $P_t^* = Q_t$ on $L^1$, where $Q_r,\,0\le r\le t$ is a propagator generated by the operator
	\begin{align}\label{LsDef}
			L^\sharp(r)v
		:=L(t-r)^* v =\operatorname{div}(a^\sharp(r)\nabla v)
		+\langle b^\sharp(r),\nabla v\rangle+c^\sharp(r)v,
	\end{align}
		where
		\[
		a^\sharp(r)=a(t-r),\quad
		b^\sharp(r)=-b(t-r),\quad
		c^\sharp(r)=c(t-r)-\operatorname{div}b(t-r).
		\]
		If $L$ satisfies \eqref{aCond}, then so does $L^\sharp$. Furthermore, if  $\psi$ is a separation function satisfying \eqref{kCond}--\eqref{gCond}, then  
		\begin{align}
			&	A_{L^\sharp}[-\mu\psi](r)\notag\\
				=&\es_{x\in \Om}
				\sbr{\mu^2\inn{a(t-r)\nabla \psi}{\nabla \psi}- 
					\mu \del{\div(a(t-r)\nabla \psi)+\inn{b(t-r)}{\grad \psi}}}_+\notag\\
				\le& \mu^2  +  \mu \k,\notag\\
		&	(-\psi)(Y,X)=	\psi(X,Y)\ge \g d_{XY}.\notag
		\end{align}
Hence, applying \corref{cor33} to $Q_t$ with $\phi=-\mu\psi$,
we find, similar to \eqref{321b}, 
	\begin{align}\notag
		 \|\chi_YQ_t\chi_X\|_{1\to1} 
		\le& e^{-G_1(\psi,\mu,t)}. 
	\end{align}
Recalling the relations \eqref{dual} and   $Q_t=P_t^*$, we conclude 
	\[
	\|\chi_XP_{t}\chi_Y\|_{\infty\to\infty}
	\le e^{-G_1(\psi,\mu,t)}.
	\]
	From here we proceed exactly as Step 2 in \secref{sec22} to conclude 
 that \eqref{122} holds for $p=\infty$.
 
  Finally, Riesz--Thorin interpolation between
		$p=1$ and $p=\infty$ gives \eqref{122} for all $1<p<\infty$. This completes the proof of \thmref{thm1}. \qed

		\section{Proof of \thmref{thm2}}\label{sec42}
		The proof follows the same line as Sects.~\ref{sec32}--\ref{sec22}, so we only sketch the key steps.

		Let $u$ be a weak solution  as in \thmref{thm2}, and take $\phi\in C_b^\infty(\Rb^n)$ to be determined later.  We set 
		\begin{align}\notag
	w(t):=e^\phi u(t),
	\quad
	M(t):=\int_{\Rb^n} w(t)\,dx, 		\quad 	M_R(t):=\int_{\mathbb R^n}\chi_Rw(t)\,dx.
\end{align}
Here $\chi_R$ is a family of cut-off functions satisfying \eqref{chiDef}. 
	Using definition \eqref{DSn} and the non-negativity of $u$, we find, 
					for every non-negative \(\theta\in C_c^\infty([0,T))\), 
								\begin{align}
					&	-\int_0^T M_R(t)\theta'(t)\,dt 
						\notag\\
						\le& M_R(0)\theta(0) +\int_0^T \theta(t) \del{A_{L(u)}[\phi](t) M(t)+ E_{R,u}(t)}\,dt,\label{41}
					\end{align}
					where   $A_{L(u)}$ is as in \eqref{Adef}, and
					\begin{align}\notag
							E_{R,u}(t)
						:=&\int_{\mathbb R^n}w
						D_{R,u}dx,\\
						D_{R,u}(x,t):=&\operatorname{div}(a(u)\nabla\chi_R)
						+\langle2a(u)\nabla\phi-b(u),
						\nabla\chi_R\rangle.
					\end{align}
					
			Consider the first term in the r.h.s.~of \eqref{41}. 		Since $\supp u_0\subset Y$ and $u_0\ge0$, H\"older's inequality gives
					\begin{align}\notag
						M_R(0)\le \norm{u_0}_{L^1(Y)}e^{\sup_Y\phi}.
					\end{align}
			Next, for a.e.~$t\in I$, since the derivatives of \(\chi_R\) are supported in
		\(\mathcal A_R:=\{R\leq|x|\leq2R\}\), we have by H\"older's inequality that 
		\begin{align}
			|E_{R,u}(t)|
			\leq &C	\int_{\mathcal A_R}w(t)\,dx\,\bigg(
			\frac{\|\nabla a (u)\|_{L^\infty(\Rb^n)}}{R}
			+\frac{\|a(u) \|_{L^\infty(\Rb^n)}}{R^2}
			\notag\\
			&+\frac{\|a(u) \|_{L^\infty(\Rb^n)}^{1/2}	\|\inn{a(u)\grad\phi}{\grad\phi}\|_{L^\infty(\Rb^n)}^{1/2}}{R}
			+ \norm{\frac{b(u)}{1+\abs{x}}}_{L^\infty(\Rb^n)}\bigg).
			\label{eq:E-R-bound}
		\end{align}
		 {Since $w\ge0$ and \(w(t)\in L^1(\mathbb R^n)\), we have  $\int_{\mathcal A_R}w(t,x)\,dx\to 0$ as $R\to\infty$.} This, together with the coefficients assumption in \eqref{bCond} and the first condition in \eqref{kCondu}, implies $\lim_R E_{R,u}(t)=0$ for a.e.~$t$.
Sending $R\to\infty$ and using Gr\"onwall's inequality and dominated convergence theorem, we conclude
		\begin{align}\notag
							&M(t)
			\le   \norm{u_0}_{L^1(Y)}\exp\del{\sup_Y \phi+\int_0^t A_{L(u)}[\phi](\tau)\,d\tau}  ,
		\end{align}
		first for a.e.~$t\in I$, and then extended to all $t\in I$ by the weak continuity of $t\mapsto u(t)$.

		Now write  $\chi_X u  = \chi_X e^{-\phi}w $. Then applying H\"older's inequality again gives, for any $t\in I$, 
\begin{align}\notag
		\norm{u(t)}_{L^1(X)}=&	\norm{\chi _X u(t)}_{L^1(\Rb^n)} \notag\\
		\le&\norm{\chi_X e^{-\phi} }_{L^\infty(\Rb^n)} M(t)\notag\\
		\le&  \norm{u_0}_{L^1(Y)} \exp\del{-\phi(X,Y)+\int_0^t A_{L(u)}[\phi](\tau)\,d\tau} .\label{44}
\end{align}
Now choose $\phi=\mu\psi$, with $\psi$ satisfying \eqref{kCondu}, \eqref{gCond}.
Owing to \eqref{kCondu}, we have
$A_{L(u)}[\phi]\le \mu^2+\mu\k$; cf.~\eqref{Aest}. 
Hence, using \eqref{44}  in place of \eqref{318}, and 
	proceeding  exactly as in Step 2 in \secref{sec22}, we arrive at the desired result. 
\qed

		\section{Application to the McKean-Vlasov equation}
		\label{sec:examples}

	The McKean-Vlasov equation reads
		\begin{align}
			\label{MV}
			\partial_t f+v \cdot \nabla_x f+(K \star \rho) \cdot \nabla_v f=\sigma \Delta_v f.
		\end{align}
		Here $\si>0$ is the diffusion strength, $f: \Rb^n_x \times \Rb^n_v\times (0,\infty)\to \Rb$ is the phase-space density, $K:\Rb^n_x\to\Rb^n_x$ is the force   (vector field),   $\rho(x,t)= \int _{\Rb^n}fdv$ is the spatial  density, and  $(K \star \rho)_i = K_i*\rho$. 
		
	Eq.~\eqref{MV} is the Fokker-Planck (forward Kolmogorov) equation for the McKean-Vlasov nonlinear SDE, which is the mean-field limit of a system of interacting particles with a velocity noise. 

	In the noninteracting case, i.e., when $K\equiv 0$, eq.~\eqref{MV} admits a fundamental solution, given explicitly by Kolmogorov's formula 
(\cite{Kol}) as
	\begin{align}
		\label{eq:Kolmogorov-kernel}
		\Gamma_t(x,v;x_0,v_0)
		=\left(\frac{\sqrt3}{2\pi\sigma t^2}\right)^n
		\exp\!\left(
		-\frac{|v-v_0|^2}{4\sigma t}
		-\frac{3\left|x-x_0-\frac t2(v+v_0)\right|^2}
		{\sigma t^3}
		\right).
	\end{align}
	Thus the stochastic growth scale is $t^{1/2}$ in velocity and $t^{3/2}$ in
	position. 
	
	 In what follows, we recover the velocity scaling  with general $K$. 
	More precisely, 	we study the propagation properties of $f$ in the velocity space, i.e., transport between subsets  of the form $$\widetilde S = \Rb^n_x\times S,\quad S\subset \Rb^n_v.$$ 
We prove 
		\begin{corollary}[Diffusion bounds for the McKean-Vlasov equation]

	Let $f$ be a non-negative $L^1$-weak solution to \eqref{MV}  with $f_0\in L^1(\Rb^n_x\times \Rb_v^n)$, $f_0\ge0$, s.th.~$t\mapsto f(t)$ is weakly continuous in $L^1$. 
					Assume $\si=1$, $K\in W^{1,\infty}(\Rb^n)$.
				Then, for any $ \beta>\norm{K}_{L^\infty}\norm{f_0}_{L^1(\Rb^n_x\times \Rb^n_v)}$ and  non-empty convex   subsets $X,\,Y\subset \Rb^n_v$ with $d_{XY}>0$, we have, for $f_0$ supported in $\widetilde Y$ and all $t>0$, 
			\begin{align} 
				\| f(t)\|_{L^1(\widetilde X)}\le  \exp\del{-\frac{(d_{XY}-\beta t )^2}{4  t}}\| f_0\|_{L^1(\widetilde Y )}, \quad 	\beta t<  {d_{XY}} .
			\end{align} 
		\end{corollary}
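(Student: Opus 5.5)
The plan is to obtain the corollary as a direct instance of \thmref{thm2} on $\Rb^{2n}=\Rb^n_x\times\Rb^n_v$, with $z=(x,v)$, $\si=1$ and unknown $u=f$. First I rewrite \eqref{MV} in the form \eqref{nl-para-eq}--\eqref{Lu}:
\begin{align*}
\di_t f=\div_z(a\grad_z f)+\inn{b(f)}{\grad_z f}+c f,\qquad a=\begin{pmatrix}0&0\\0&I_n\end{pmatrix},\quad b(f)=\bigl(-v,\,-(K\star\rho)(x,t)\bigr),\quad c=0 .
\end{align*}
Here $a$ is constant, symmetric, $a\ge0$ and $\grad a=0$, so $a\in L^\infty(I,W^{1,\infty})$. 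Since $K\star\rho$ does not depend on $v$ and $v$ does not depend on $x$, we get $\div_z b(f)=0$. Hence $\div b-c=0\ge0$ and $c\le 0$, as required in \eqref{bCond}.

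The only nontrivial point in \eqref{bCond} is $\frac{b(f)}{1+|z|}\in L^\infty$. This follows from $|v|\le 1+|z|$ together with the bound
\[
\|K\star\rho(t)\|_\infty\le\|K\|_\infty\|\rho(t)\|_{L^1}=\|K\|_\infty\|f(t)\|_{L^1}.
\]
So I need the mass bound $\|f(t)\|_{L^1}\le\|f_0\|_{L^1}$. Since $f\in L^\infty(I,L^1)$, the crude bound $\sup_t\|f(t)\|_1<\infty$ already suffices for \eqref{bCond}. I also want the sharp bound for the constant, and I will prove it by testing \eqref{DSn} with $\theta(t)\chi_R(z)$. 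This is the argument of \secref{sec42} with $\phi=0$, so that $A_{L(f)}[0]=0$. The error term $E_{R,f}$ is controlled as in \eqref{eq:E-R-bound}, using
\[
\frac{|b|}{1+|z|}\le 1+\|K\|_\infty\sup_t\|f(t)\|_1 \qquad\text{and}\qquad \int_{\mathcal A_R}f(t)\to0 .
\]
Gr\"onwall and weak continuity then give $\|f(t)\|_1\le\|f_0\|_1$ for all $t$, and hence $\|K\star\rho\|_\infty\le\|K\|_\infty\|f_0\|_1$.

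Next I construct a separation function depending on $v$ only, with $\g=1$. Since $X,Y$ are convex, $X-Y$ is convex with $\dist(0,X-Y)=d_{XY}>0$. By the Hahn--Banach separation theorem, applied to the closure of $X-Y$ and the nearest point, there is a unit vector $e$ with
\[
\inf_{x\in X}e\cdot x-\sup_{y\in Y}e\cdot y\ge d_{XY}.
\]
Set $s_0=\sup_Y e\cdot y$ and $s_1=s_0+d_{XY}$. Given $\eps>0$, I choose a smooth nondecreasing bounded $h:\Rb\to\Rb$ with $h'=1$ on $[s_0,s_1]$, $0\le h'\le1$, and $|h''|\le\eps$. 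For instance, let $h'$ decay linearly to $0$ over intervals of length $1/\eps$ outside $[s_0,s_1]$ (mollified). Put $\psi(x,v)=h(e\cdot v)\in C_b^\infty(\Rb^{2n})$. By monotonicity of $h$,
\[
\psi(\widetilde X,\widetilde Y)\ge h(s_1)-h(s_0)=d_{XY},
\]
so \eqref{gCond} holds with $\g=1$, and $\dist(\widetilde X,\widetilde Y)=d_{XY}$. For \eqref{kCondu} I compute:
\begin{itemize}
\item $\inn{a\grad\psi}{\grad\psi}=|\grad_v\psi|^2=h'^2\le1$;
\item $\div(a\grad\psi)=\Lap_v\psi=h''$, so $|\div(a\grad\psi)|\le\eps$;
\item $\inn{b}{\grad\psi}=-(K\star\rho)\cdot e\,h'$, because $\grad_x\psi=0$ kills the $v\cdot\grad_x$ term, so $|\inn{b}{\grad\psi}|\le\|K\|_\infty\|f_0\|_1$.
\end{itemize}
Thus $\widetilde X,\widetilde Y$ are $(\k,1,f)$-separated with $\k=\|K\|_\infty\|f_0\|_1+\eps$. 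Choosing $\eps$ small gives $\k\le\beta$.

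Finally, \thmref{thm2} applies with this $\k$. For $\beta t<d_{XY}$ we have $\k t<d_{XY}$ and $(d_{XY}-\k t)^2\ge(d_{XY}-\beta t)^2$, so \eqref{123} yields the claimed bound. The step I expect to need the most care is the a priori mass bound. It requires running the truncation argument with the linearly growing drift $-v\cdot\grad_x$, where the condition $\frac{b}{1+|z|}\in L^\infty$ (rather than $b\in L^\infty$) is exactly what makes the error $E_{R,f}$ vanish. The convex-separation step is standard but is where convexity of $X,Y$ is used to get $\g=1$.
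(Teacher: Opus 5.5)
Your proposal is correct and follows the paper's proof step for step. You use the same rewriting with $a=\mathrm{diag}(0,\1)$, $b(f)=(-v,-K\star\rho)$, $c=0$, the same convexity-based separation function $\psi=h(e\cdot v)$, and then apply \thmref{thm2}. The only differences are minor: you justify the mass bound $\norm{f(t)}_1\le\norm{f_0}_1$ (which the paper merely asserts) by rerunning the truncation argument of \secref{sec42} with $\phi=0$, and you take $\k=\norm{K}_\infty\norm{f_0}_1+\eps<\beta$ and use monotonicity in $\k$ instead of verifying $(\beta,1,f)$-separation directly.
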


\begin{proof}

To begin with,
we arrange the McKean-Vlasov equation to the form $\di_t f = L(f)f$,   where  $L(f)$ is as in \eqref{Lu}, with the coefficients given by  \begin{align}\label{Kc}
		a\equiv \begin{pmatrix}
			0&0\\0& \1
		\end{pmatrix},\quad b(f) = (-v,-K \star \rho)=:(b_x,b_v(f)),\quad c\equiv 0.
	\end{align}
Below we derive a priori estimate on $b(f)$. 
We note first that $\div_{x,v} b \equiv 0$. Next, write $M:=\norm{f_0}_{L^1(\Rb^n_x\times \Rb^n_v)}$. Since $f$ is a non-negative weak solution to \eqref{MV}, for all $t>0$,  we have  $\norm{\rho(t)}_1=\norm{f(t)}_{L^1(\Rb^n_x\times \Rb^n_v)}\le  M$, and therefore, by Young's inequality,  $\norm{b_v(f)(\cdot,t)}_\infty\le \norm{K}_\infty \norm{\rho(t)}_1\le \norm{K}_\infty M$. Similarly, we have $\norm{\grad _{x,v}b }_{L^\infty(\Rb^n_x\times \Rb^n_v)}\le C(1+\norm{\grad K}_{\infty}M) $.
Hence the coefficients of $L(f)$ satisfy the assumptions of \thmref{thm2}. 

We now claim $\widetilde X,\,\widetilde Y$ are $(\beta,1,f)$ separated. Once this claim is proved, the desired result follows by
applying \thmref{thm2} and using the fact that $d_{\widetilde X\widetilde Y}=d_{XY}$. 

Let  $d=d_{XY}>0$. 
Since $X,\,Y$ are convex, by the separation hyperplane theorem, there exists a unit vector $e\in\Rb^n$ s.th.~if $
	s_Y:=\sup_{y\in Y}\inn e y,$ $
	s_X:=\inf_{x\in X}\inn e x,$
	then $s_X,s_Y$ are finite and $s_X-s_Y=d$.
Now let $
	\eps:=\frac12\left(
	\beta-\norm K_\infty M
	\right)>0$, and 
	choose $F\in C_b^\infty(\Rb)$ such that
	\[
	0\le F'\le1,\qquad
	F'=1\ \text{on }[s_Y,s_X],
	\qquad
	\norm{F''}_\infty\le\eps.
	\] Define $
	\psi(x,v):=F\bigl(\inn e v\bigr).$
	Then $\psi\in C_b^\infty(\Rb^{2n})$ and
	\[
	\inn{a\grad\psi}{\grad\psi}
	=\abs{F'(\inn e v)}^2\le1.
	\]
	Furthermore, we compute
	\[
	\div(a\grad\psi)=\Delta_v\psi=F''(\inn e v),
	\quad
	\inn{b(f)}{\grad\psi}
	=-F'(\inn e v)\inn{K\star\rho}e.
	\]
	Hence
	\[
	\es_{\Rb^{2n}\times[0,\infty)}
	\left(
	\abs{\div(a\grad\psi)}
	+\abs{\inn{b(f)}{\grad\psi}}
	\right)
	\le\eps+\norm K_\infty M<\beta.
	\]
	This verifies \eqref{kCondu} with $\k=\beta$.
	Moreover, by monotonicity of $F$,
	\[
	\psi(\widetilde X,\widetilde Y)
	\ge F(s_X)-F(s_Y)
	=s_X-s_Y=d.
	\]
This verifies \eqref{gCond} with $\g=1$. Thus the claim is proved.  
	\end{proof}

		\section*{Acknowledgments}  
		The research of M.L.\ is supported by the DFG through the grants TRR 352 – Project-ID 470903074 and FOR 5413 – Project-ID 465199066, as well as by the European Union (ERC Starting Grant MathQuantProp, Grant Agreement 101163620).\footnote{Views and opinions expressed are however those of the authors only and do not necessarily
	reflect those of the European Union or the European Research Council Executive Agency. Neither
	the European Union nor the granting authority can be held responsible for them.} 
I.M.S. is supported by NSERC Grant NA7901. 
J.Z.~is supported by National Natural Science Foundation of China Grant 12401602, China Postdoctoral Science Foundation Grant 2024T170453, National Key R \& D Program of China Grant 2022YFA100740,   and the Shuimu Scholar program of Tsinghua University. He thanks  J.~Hu and B.~Zhu for helpful discussions. 
		
				\section*{Declarations}
		\begin{itemize}
			\item Conflict of interest: The Authors have no conflicts of interest to declare that are relevant to the content of this article.
			\item Data availability: Data sharing is not applicable to this article as no datasets were generated or analysed during the current study.
		\end{itemize}
		
		\appendix

\section{Existence of propagator} 
\label{secWP}

In this appendix, 	 {we construct $P_t$ by approximating $L$ with $L_\eps = L+\eps \Lap$,  using standard parabolic and semi-group theory for $L_\eps$, and extracting a limit of the propagators generated by $L_\eps$. 
A related construction is used by ter Elst et al (\cite{tERSZ, tERSZa}) to study degenerate elliptic operators using Dirichlet form.
	
In general, $P_t$ depends on the choice of approximating sequence $\eps_j$, and therefore may not be  unique.
	Uniqueness is a separate problem and typically requires higher regularity on the coefficients; see Le Bris-Lions \cite{LBL} for the uniqueness theory for Fokker-Planck type equations in the Euclidean case, which encompasses \eqref{Lgen}. 	}
	
	Throughout the appendices, set $I:=(0,\infty)$.
Fix $\eps>0$.  We define $
a_\eps := a + \eps \1$ and  
\begin{equation}\label{eq:approx}
	L_\eps  = \operatorname{div}(a_\eps \nabla (\cdot )) +\inn{ b  }{\nabla (\cdot)}  + c.
\end{equation}
Given $1\le p \le \infty$ and $u_0\in L^p(\Om)$, we consider the  Cauchy problem
\begin{align}
	\label{PE2}
	\begin{cases}
		\partial_t u^\eps = L_\eps u^\eps, & \text{in } \Omega\times I,\\
		u^\eps(\cdot, 0) = u_0,  &
	\end{cases}
\end{align}
subject to the Neumann 
boundary condition
\begin{align}
	\label{BC'}
	\inn{a_\eps\grad u^\eps}{\nu}=0\quad \text{if $\di\Om\ne\emptyset$}. 
\end{align}  
Since $a\ge0$ and so $a_\eps \ge \eps \1$, the equation $\di_t u^\eps = L_\eps u^\eps$ is uniformly parabolic. 
The next proposition  establishes the existence of solution for \eqref{PE2}--\eqref{BC'}. 
\begin{proposition}\label{prop:eps-prop}Let \eqref{aCond} hold.
	Then, for every $\eps>0$, there exists a 
	unique family of operators  $P_t^\eps\in \cB(L^2(\Om))$, $t\ge 0$, s.th.
	 \begin{enumerate}[label=(\roman*)]
		\item   {$t\mapsto P_t^\eps$ is strongly continuous  in  $\cB(L^2)$;}
		\item for every $t\ge 0$, $P_t^\eps$ is positivity-preserving and extends to a contraction  on $L^p(\Om)$,     $1\le p\le \infty$;  
		\item  for every $u_0\in L^2(\Om)$ and $t>0$,
		$u^\eps(t)=P_t^\eps u_0$ satisfies $u^\eps (0)=u_0$, $\eps \norm{\grad u^\eps(t)}_{L^2(\Om\times I)}^2\le \frac12\norm{u_0}_{L^2(\Om)}^2$, and, in the distributional sense on $\cD'(I)$,
		\begin{align}
			\label{B5}
			\di_t \int_\Om u^\eps v \,dx+\int_\Om\del{\inn{a_\eps\grad u^\eps}{\grad v} 
				- \inn{b}{\grad u^\eps}v 
				-  cu^\eps v }\,dx=0,
		\end{align}
		for all $ v\in H^1(\Om).$
	\end{enumerate}

\end{proposition}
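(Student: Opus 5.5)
The plan is to build $P^\eps_t$ from the theory of uniformly parabolic, non-autonomous equations in divergence form via the Lions/J.-L. Lions variational framework. Set $V=H^1(\Om)$ and $H=L^2(\Om)$. For a.e.~$t$ define the bilinear form
\[
\mathfrak a_t(u,v)=\int_\Om\del{\inn{a_\eps\grad u}{\grad v}-\inn{b}{\grad u}v-cuv}\,dx .
\]
By \eqref{aCond}, $t\mapsto\mathfrak a_t(u,v)$ is measurable and $\abs{\mathfrak a_t(u,v)}\le C\norm{u}_{H^1}\norm{v}_{H^1}$. Since $a_\eps\ge\eps\1$, Young's inequality gives the G\aa rding bound $\mathfrak a_t(u,u)\ge \frac\eps2\norm{\grad u}_2^2-C_\eps\norm{u}_2^2$.

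Lions' theorem then yields, for each $u_0\in L^2$, a unique $u^\eps\in L^2_{loc}([0,\infty),H^1)\cap H^1_{loc}([0,\infty),(H^1)')\subset C([0,\infty),L^2)$ satisfying \eqref{B5} for all $v\in H^1$. The Neumann condition \eqref{BC'} is encoded by testing against all of $H^1$ rather than $H^1_0$. Setting $P^\eps_tu_0:=u^\eps(t)$ gives bounded linear operators. Strong continuity in (i) comes from $u^\eps\in C([0,\infty),L^2)$, and uniqueness of the family comes from uniqueness of the solution.

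For the energy bound and $L^2$-contractivity, I would test \eqref{B5} with $v=u^\eps$, which is legitimate by the Lions–Magenes chain rule. This gives
\[
\frac12\frac{d}{dt}\norm{u^\eps}_2^2+\int\inn{a_\eps\grad u^\eps}{\grad u^\eps}-\int\inn{b}{\grad u^\eps}u^\eps-\int c\,(u^\eps)^2=0 .
\]
Write $\inn{b}{\grad u}u=\frac12\inn b{\grad u^2}$. Integrating by parts, using $\inn b\nu=0$ on $\di\Om$ (or decay of $u^2\in W^{1,1}$ when $\Om=\Rb^n$), this term becomes $-\frac12\int\div b\,u^2$. The sum of the lower-order terms is then $\int(\frac12\div b-c)u^2$. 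This is nonnegative: $\div b-c\ge0$ and $c\le0$ give $\frac12\div b-c=\frac12(\div b-c)-\frac12c\ge0$. Dropping it and using $a\ge0$ gives $\frac12\frac d{dt}\norm{u^\eps}_2^2+\eps\norm{\grad u^\eps}_2^2\le0$. Integrating in time yields both the $L^2$ contraction and $\eps\norm{\grad u^\eps}^2_{L^2(\Om\times I)}\le\frac12\norm{u_0}_2^2$.

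For (ii), positivity follows by testing with $v=(u^\eps)_-$, using that $H^1$ is a lattice and $\grad u_-=-\chi_{\{u<0\}}\grad u$. The same computation as above gives $\frac d{dt}\norm{u^\eps_-}_2^2\le0$, so $u_0\ge0$ implies $u^\eps\ge0$.

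The $L^p$ contractivity is the main obstacle. For $L^\infty$ I would test with $(u^\eps-k)_+$, where $k=\norm{u_0}_\infty$. The drift term yields $\frac12\int\div b\,(u-k)_+^2$, and the zeroth-order term contributes $-\int c\,u\,(u-k)_+\ge -\int c\,(u-k)_+^2-k\int c\,(u-k)_+$. Both pieces are $\ge0$ because $c\le0$ and $k\ge0$. Combining gives $\int(\frac12\div b-c)(u-k)_+^2\ge0$ plus a nonnegative remainder, so $(u-k)_+\equiv0$; applying the same argument to $-u$ gives $\norm{P^\eps_t}_{\infty\to\infty}\le1$.

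For $L^1$ I would use duality. The adjoint (backward) problem has generator $L_\eps^*$, with drift $-b$ and potential $c-\div b$. This potential is $\le0$ by the hypothesis $\div b-c\ge0$, and $\div(-b)-(c-\div b)=-c\ge0$, so \eqref{aCond} holds for the adjoint generator. Its solution operator is therefore an $L^\infty$ contraction. The identity $\int P^\eps_tu_0\,w=\int u_0\,\widetilde P^\eps_{t}w$, obtained by testing both equations against each other, then gives the $L^1$ contraction on $L^1\cap L^2$. Riesz–Thorin interpolates to all $1\le p\le\infty$, and density extends $P_t^\eps$ to $L^p$ for $p<\infty$; for $p=\infty$ one uses the adjoint of the $L^1$ bound.

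The delicate points are justifying the integrations by parts with only $W^{1,\infty}$ coefficients, and handling the boundary term on unbounded $\Om=\Rb^n$, where I would insert the cutoffs $\chi_R$ of \eqref{chiDef}.
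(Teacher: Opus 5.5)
Your proposal is correct and follows the same route as the paper: Lions' variational existence and uniqueness theorem (the paper cites Dautray--Lions, Ch.~XVIII \S 3), the energy identity for (i) and (iii), and truncation tests combined with the dissipativity conditions in \eqref{aCond} for positivity and the $L^p$ contractions (Dautray--Lions, Ch.~XVIII \S 4). You also correctly supply the details the paper omits for general $b,c$, including the $L^1$ bound by duality with the time-reversed adjoint problem, whose coefficients $(a_\eps,-b,c-\div b)$ again satisfy \eqref{aCond}.
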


\begin{proof}

This proposition follows from standard parabolic theory, so we will provide the references and  sketch the proof. 

Let $u_0\in L^2(\Om)$. Then it follows from
\cite[Chapter~XVIII, \S 3, Thms.~1--2]{DLb} that \eqref{PE2}--\eqref{BC'} has a unique  energy solution  $u^\eps$,  in the sense of \cite[Chapter~XVIII, eqs.~(3.21)--(3.23)]{DLb}. 
We first define $P_t^\eps$, $t\ge 0,$ on $L^2(\Om)$ by setting $P_t^\eps u_0 := u^\eps(t)$. Then (i) and (iii) hold by the energy identities of $u^\eps$. Furthermore, $P_t^\eps$ extends to a positivity-preserving contraction on $L^p(\Om)$ for all $1\le p\le \infty$.	In the case  $b,\,c=0$, this follows directly from the argument in	\cite[Chapter~XVIII, \S 4,  Thms.~3--4]{DLb}, and the general case follows by using the dissipativity condition in  \eqref{aCond}. We omit the details here.
\end{proof}

We now prove \propref{prop1} by using the construction above.

\begin{proposition}[Existence of propagator]
	\label{prop:visc-family}\label{propA}
 {Let $P_t^\eps,\, \eps>0,\,t\ge 0$ be as in Proposition~\ref{prop:eps-prop}. }
Then there exist 
a sequence $\eps_j\downarrow0$ and a family of operators $P_t\in \cB(L^2(\Om)), t\ge 0,$  s.th.~for every $T>0$ and $f,g\in L^2(\Om)$,
\begin{equation}\label{eq:luwot}
	\lim_{j\to\infty}	\sup_{ 0\le  t\le T}
	\left|\inn{(P_t^{\eps_j}-P_t)f}{g}_{L^2}\right|
	=0.
\end{equation}
Furthermore, $P_t$ satisfies \ref{P1}--\ref{P3}.  
\end{proposition}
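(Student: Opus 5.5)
The proof will be a compactness argument in the weak operator topology, based on the uniform bounds of \propref{prop:eps-prop}. Since $\norm{P_t^\eps}_{2\to2}\le1$ for all $\eps,t$, the family is bounded. To obtain uniformity in $t\in[0,T]$ we need equicontinuity in $t$ of $t\mapsto\inn{P_t^\eps f}{g}$ for a dense set of $g$. For $g\in H^1(\Om)$, \eqref{B5} gives
\begin{align}
\abs{\inn{P^\eps_t f - P^\eps_s f}{g}}\le \int_s^t \Bigl(\norm{a_\eps}_\infty\norm{\grad u^\eps}_2\norm{\grad g}_2+\norm{b}_\infty\norm{\grad u^\eps}_2\norm{g}_2+\norm{c}_\infty\norm{u^\eps}_2\norm{g}_2\Bigr)d\tau. \notag
\end{align}
This is the main obstacle, because the energy bound $\eps\norm{\grad u^\eps}^2_{L^2}\le\frac12\norm{f}^2$ degenerates as $\eps\to0$. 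The fix is to move the derivatives onto $g$. For $g\in C^\infty_c$ (or smooth, with $\inn{a\grad g}{\nu}=0$ on $\di\Om$), integrate by parts in \eqref{B5}:
\begin{align}
\di_t\inn{u^\eps}{g}=\inn{u^\eps}{L^*g+\eps\Lap g}, \notag
\end{align}
which yields $\abs{\inn{P^\eps_tf-P^\eps_sf}{g}}\le C_g\abs{t-s}\norm{f}_2$, uniformly in $\eps\le1$, since $a,b\in W^{1,\infty}$ and $c\in L^\infty$. Combining this with the uniform bound $\norm{P^\eps_t}\le1$, a $3\epsilon$-argument extends equicontinuity to all $g\in L^2$.

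Next, fix countable dense sets $\{f_k\},\{g_l\}\subset L^2$ with each $g_l$ smooth. For each pair $(k,l)$ and each $T\in\mathbb N$, Arzel\`a--Ascoli gives a subsequence along which $\inn{P^\eps_tf_k}{g_l}$ converges uniformly on $[0,T]$. A diagonal extraction yields a single sequence $\eps_j\downarrow0$ that works for all $(k,l,T)$. Density together with the uniform bound then shows that $\inn{P^{\eps_j}_tf}{g}$ is uniformly Cauchy on $[0,T]$ for all $f,g\in L^2$. The limit is a bounded sesquilinear form of norm at most $\norm f\norm g$, so by Riesz representation it defines $P_t\in\cB(L^2)$ with $\norm{P_t}\le1$, and \eqref{eq:luwot} holds. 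Property \ref{P1} follows because a uniform limit of continuous functions is continuous.

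For \ref{P2}, positivity passes to the weak limit: if $f,g\ge0$ then $\inn{P^{\eps}_tf}{g}\ge0$, hence $\inn{P_tf}{g}\ge0$. For $f\in L^2\cap L^p$ and $g\in L^2\cap L^{p'}$, the estimate $\abs{\inn{P_t^\eps f}{g}}\le\norm f_p\norm g_{p'}$ passes to the limit. By duality, $\norm{P_tf}_p\le\norm f_p$, so $P_t$ extends to a contraction on $L^p$ by density for $p<\infty$. For $p=\infty$, take the weak-* extension, or equivalently the dual of the $L^1$ bound applied to the analogous weak limit of the adjoints.

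For \ref{P3}, test \eqref{B5}, in its integrated-by-parts form, against $\varphi\in C_c^\infty(\overline\Om\times[0,\infty))$ satisfying the boundary condition. This gives
\begin{align}
\int_I\int_\Om u^\eps(\di_t+L^*+\eps\Lap)\varphi+\int_\Om u_0\varphi(0)=0. \notag
\end{align}
Now let $\eps_j\to0$, using \eqref{eq:luwot} and dominated convergence in $t$; the term $\eps\Lap\varphi$ vanishes in the limit. This proves the identity for $u_0\in L^2$. For general $u_0\in L^p$, approximate $u_0$ in $L^p$ (weak-* for $p=\infty$) by $L^2\cap L^p$ functions and use the $L^p$ contractivity. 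One point needs checking: the Neumann condition for $L_\eps$ involves $a_\eps$, whereas the test functions satisfy only $\inn{a\grad\varphi}{\nu}=0$. The boundary term $\eps\inn{\grad\varphi}{\nu}$ therefore has to be handled separately; it is $O(\eps)$ and vanishes in the limit.
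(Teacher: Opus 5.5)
Your proposal is correct and follows essentially the paper's route: viscosity approximation, uniform-in-$\eps$ Lipschitz bounds in $t$ from moving $L_\eps^*$ onto smooth test functions, Arzel\`a--Ascoli with a diagonal extraction, density, and passage to the limit in the weak formulation; the only real variation is that you get all $L^p$ contraction bounds at once by $L^p$--$L^{p'}$ duality against $L^2\cap L^{p'}$, whereas the paper uses weak lower semicontinuity for $p=1$, the limit of the adjoints for $p=\infty$, and Riesz--Thorin in between. One small correction: from the available bounds you can only show the boundary term $\eps\int_0^\infty\!\int_{\di\Om}u^\eps\,\inn{\grad\varphi}{\nu}$ is $O(\sqrt\eps)$, not $O(\eps)$ (via the trace inequality and the degenerate energy bound $\eps\norm{\grad u^\eps}_{L^2(\Om\times I)}^2\le\frac12\norm{u_0}_2^2$), which still suffices, and the paper avoids it entirely by not integrating the $\eps$-term by parts and bounding $\eps\int\!\!\int\grad u^\eps\cdot\grad\varphi\le C\sqrt\eps\norm{u_0}_2\norm{\grad\varphi}_{L^2(\Om\times I)}$ directly.
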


\begin{proof}

Take any sequence $\eps\downarrow0$.  Let
	$\cD_0\subset C_c^\infty(\Om)$ be a countable dense rational-linear subspace in
	$L^2(\Om)$.  Given $f,g\in\cD_0$ and   $m\in\mathbb{N}$, set
	\[
	F_\eps(t)\equiv F_\eps(t,f,g):=\inn{P_t^\eps f}{g}_{L^2},\quad t\in[0,m].
	\]
By \eqref{B5}, we have, in the distributional sense on $\cD'(I)$,
	\begin{align*}
		\partial_tF_\eps(t)
		&=\inn{P_t^\eps f}{L_\eps(t)^*g}_{L^2}.
	\end{align*}
This, together with the $L^2$-contractivity of $P_t^\eps$, implies, for $t'<t$ and $0<\eps \le 1$, 
	\begin{align}\notag
				\abs{F_\eps(t)}&\le \norm{f}_2 \norm{g}_{2}\\
		|F_\eps(t)-F_\eps(t')|
		&\le\notag \norm f_2\int_{t'}^t\norm{L_\eps(s)^*g}_2\,ds,\\
		&\le C\abs{t-t'},\notag
	\end{align}
	where $C$ depends only on $\norm{f}_{L^2}, \norm{g}_{H^2}$, and the coefficients of $L$.  
	Therefore, for any fixed triple $(f,g,m)\in\cD_0\times \cD_0\times \mathbb N$, the functions $F_\eps(t), 0<\eps\le 1,$ are uniformly bounded and
	equicontinuous. Using the Arzel\`a--Ascoli theorem, together with a
	diagonal extraction over $(f,g,m)$, we obtain a 
	sequence $\eps_j$ and a family of continuous  functions, $F(\cdot,f,g)$, s.th.~for any $(f,g,m)\in\cD_0\times \cD_0\times \mathbb N$,
	\begin{align}\notag
		\lim_{j\to\infty}\sup_{ 0\le  t\le m}\abs{F_{\eps_j} (t,f,g)-F(t,f,g)}=0. 
	\end{align}
	Define $P_t,\,t\ge 0,$ first on $\cD_0$ by the bilinear form $\inn{P_tf}{g}=F(t,f,g)$. Then, for each $t$,  extend $P_t$ to all of $L^2$ by continuity. Since $F(\cdot,f,g)$ is continuous, it follows from a $3$-$\eps$ argument that $t\mapsto P_t$ is continuous in the w.o.t.~of $\cB(L^2)$, and \eqref{eq:luwot} holds.

The  positivity-preserving property is closed under the w.o.t.~convergence, and the construction above shows $P_t$ is a contraction on $L^2$. To see that $P_t$ extends to a contraction on $L^1$, take $f\in L^1\cap L^2$ and exhaust $\Om$ by finite-measure
	sets $K_m$.  Since
	$v\mapsto\norm{\one_{K_m}v}_1$ is convex and continuous on $L^2$, it is
	weakly lower semicontinuous.  This, together with the $L^1$-contractivity of $P_t^\eps$, yields
	\[
	\norm{\one_{K_m}P_tf}_1
	\le\liminf_{j\to\infty}
	\norm{\one_{K_m}P_t^{\eps_j}f}_1
	\le\norm f_1.
	\]
	Using monotone convergence and density, we conclude that $P_t$ extends to a contraction on $L^1$. 
	
	Next, we claim $P_t$ extends to a contraction on $L^\infty$. If this claim holds, then $P_t$ also extends to a  contraction on  $L^p$, $1<p<\infty,$ by
 	the Riesz-Thorin interpolation theorem
	and a  density argument. 

	By \eqref{eq:luwot}, 
	for each $t$,  the $L^2$-adjoint
	$(P_t^{\eps_j})^*$ converges to some 
	$Q_t$ in the w.o.t.~of $\cB(L^2)$. 
Applying the  weak
	lower-semicontinuity argument above shows that
	$Q_t$ extends to a contraction on $L^1$. 
By duality, the   adjoint $Q_t^*$  is a contraction on $L^\infty$, and agrees with $P_t$ on $L^2\cap L^\infty$. Hence $P_t$ extends to a contraction on $L^\infty$.  This proves the claim.

Finally,  
we show that this $u$ is a  weak solution to \eqref{PE1} in the sense of \eqref{DS}. Consider first the case $u_0\in L^2(\Om)$. 
Take any admissible test function $\varphi$.
Since $u^\eps$ are energy solutions, 
using the boundary conditions $\inn{b}{\nu}=0$ and
$\inn{a\nabla\varphi}{\nu}=0$ on $\di\Om$, eq.~\eqref{B5} gives
\begin{align}
\label{A9}
\int_0^\infty\!\!\int_\Om
u^\eps(\partial_t+L^*)\varphi\,dx\,dt
+\int_\Om u_0\varphi(\cdot,0)\,dx
=\eps\int_0^\infty\!\!\int_\Om
\nabla u^\eps\cdot\nabla\varphi\,dx\,dt.
\end{align}

Consider first the r.h.s.~of \eqref{A9}.  As $\eps\downarrow0$, we have
\begin{align*}
\eps	\left|\int_0^\infty\!\!\int_\Om
	\nabla u^\eps\cdot\nabla\varphi\right|
	&\le
	\sqrt\eps
	\left(\eps\int_0^\infty\|\nabla u^\eps\|_2^2\,dt\right)^{1/2}
	\|\nabla\varphi\|_{L^2(\Om\times I)}\\
	&\le C\sqrt\eps\,\|u_0\|_2
	\|\nabla\varphi\|_{L^2(\Om\times I)}\longrightarrow0.
\end{align*}
Next, we show that  the convergence \eqref{eq:luwot} implies, as $j\to\infty$,
\[
\left|\int_0^\infty \int_\Omega \del{u ^{\eps_j}-u }(\di_t + L^*)\varphi \right|\longrightarrow0.
\]
To see this, set \(h(t):=(\partial_t+L^*)\varphi(t)\).
Then
\(h\in L^1(0,T;L^2(\Om))\) for some $T>0$, since $\varphi$ has compact support. For a.e.~\(t\), applying
\eqref{eq:luwot} with \(g=h(t)\), gives
\[
\inn{u^{\eps_j}(t)-u(t)}{h(t)}_{L^2}\longrightarrow0.
\]
Moreover, by $L^2$-contractivity, we have
\[
\left|\inn{u^{\eps_j}(t)-u(t)}{h(t)}_{L^2}\right|
\le2\norm{u_0}_2\norm{h(t)}_2.
\]
Dominated convergence therefore gives $\int\int (u^{\eps_j}(t)-u(t)) h(t)\to0$, as desired.

Combining the above and sending $j\to\infty$ shows that 
$u$ is a weak solution  of \eqref{PE1}--\eqref{BC}, in the sense of \eqref{DS}, for $L^2$ initial data. 

Lastly, we show that $P_tu_0 $ satisfies \eqref{DS} for $L^p$ initial data.  
If $u_0 \in L^p(\Om)$ with $1\le p<\infty$, then using 
the $L^p$-contractivity of $P_t$ and approximating $u_0$ strongly by $u_{0,m}\in L^2\cap L^p$ gives \eqref{DS}. If $u_0\in L^\infty$, then take 
$u_{0,m}\in L^2\cap L^\infty$ with $\|u_{0,m}\|_\infty\le\|u_0\|_\infty$ and
$u_{0,m}\rightharpoonup^*u_0$ in $L^\infty$.  By a duality argument, we have   $P_tu_{0,m}\rightharpoonup^*P_tu_0$ in $L^\infty$ for each $t>0$.  Finally, pairing      with  
$h(t)=(\partial_t+L^*)\varphi(t)\in L^1$ and using dominated convergence gives \eqref{DS} for $p=\infty$.

This completes the proof of \propref{propA}.
\end{proof}

The direct exponential-deformation argument in \secref{sec22} gives the desired diffusion estimate at
the $L^1$ endpoint.  To obtain the $L^\infty$ endpoint with the same
exponent, we record the following adjoint compatibility property:

\begin{lemma}
	\label{lem:adjoint-compatible-viscosity}
		 {Let $P_t$ be as in \propref{propA}.} Then, given $t>0$, there exists a propagator $Q_r$, $0<r\le t$, generated by the operator
$$
L^\sharp(r)v
:=L(t-r)^* v =\operatorname{div}(a^\sharp(r)\nabla v)
+\langle b^\sharp(r),\nabla v\rangle+c^\sharp(r)v,$$
where
\[
a^\sharp(r)=a(t-r),\quad
b^\sharp(r)=-b(t-r),\quad
c^\sharp(r)=c(t-r)-\operatorname{div}b(t-r),
\]
such that $Q_t = P_t^*$ on $L^q$, $1\le q<\infty$. 

\end{lemma}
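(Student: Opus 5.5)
The natural route is to build $Q_r$ by the same viscous approximation that produced $P_t$ in \propref{propA}, and to identify its time-$t$ value with $P_t^*$ through duality at the $\eps$-level. Fix $t>0$. For $\eps>0$, let $Q^\eps_{r}$, $0\le r\le t$, denote the propagator of the uniformly parabolic problem generated by
\[
L^\sharp_\eps(r):=L_\eps(t-r)^*=\div(a_\eps(t-r)\grad\cdot)+\inn{b^\sharp(r)}{\grad\cdot}+c^\sharp(r).
\]
Its coefficients satisfy \eqref{aCond}. Indeed, $a^\sharp\ge0$. Next, $\div b^\sharp-c^\sharp=-c(t-r)\ge0$. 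Also $c^\sharp=c-\div b\le c\le0$, where $\div b-c\ge0$ gives $c-\div b\le0$. Finally, $\inn{b^\sharp}{\nu}=0$. So \propref{prop:eps-prop} applies to $L^\sharp_\eps$ on the interval $[0,t]$, after a trivial time-translation as in \remref{rem22}(i). The first step is then the classical duality identity for nondegenerate non-autonomous propagators,
\[
(P^\eps_t)^*=Q^\eps_t \quad\text{on } L^2 .
\]
I would prove it by taking $f,g\in\cD_0$ and setting $u^\eps(s)=P^\eps_s f$ and $v^\eps(s)=Q^\eps_{t-s}g$. Using the weak formulation \eqref{B5} for both, with the Neumann conditions $\inn{a_\eps\grad\cdot}{\nu}=0$ and $\inn{b}{\nu}=0$ killing boundary terms, one shows $\frac{d}{ds}\inn{u^\eps(s)}{v^\eps(s)}=0$. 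The pairing of $u^\eps$ with the backward equation for $v^\eps$ is legitimate because both lie in $L^2(0,t;H^1)$ with time derivative in $L^2(0,t;H^{-1})$ (Lions--Magenes). Hence $\inn{P^\eps_tf}{g}=\inn{f}{Q^\eps_tg}$.

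Next I would pass to the limit. In \propref{propA} the sequence $\eps_j$ was chosen by Arzel\`a--Ascoli, and the same argument applies to $Q^{\eps_j}_r$. Passing to a further subsequence with a diagonal extraction, $Q^{\eps_j}_r\to Q_r$ in the weak operator topology, uniformly for $r\in[0,t]$. The proof of \propref{propA}, applied verbatim to $L^\sharp$, then shows that $Q_r$ satisfies \ref{P1}--\ref{P3}; that is, $Q_r$ is a propagator generated by $L^\sharp$. Along this subsequence we still have $P^{\eps_j}_t\to P_t$ by \eqref{eq:luwot}. Taking limits in the $\eps$-level duality identity therefore gives $\inn{P_tf}{g}=\inn{f}{Q_tg}$ for $f,g\in\cD_0$, and by density for all $f,g\in L^2$. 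Thus $Q_t=P_t^*$ on $L^2$.

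Finally, I would extend the identity to $L^q$ for $1\le q<\infty$. By \ref{P2}, $P_t$ is a contraction on $L^{q'}$ and $Q_t$ is a contraction on $L^q$. For $g\in L^2\cap L^q$ and $f\in L^2\cap L^{q'}$ we have $\inn{f}{Q_tg}=\inn{P_tf}{g}$. Since $L^2\cap L^{q'}$ is weak-* dense in $L^{q'}$ when $q=1$, and norm dense otherwise, we get $Q_tg=(P_t|_{L^{q'}})^*g$. Density of $L^2\cap L^q$ in $L^q$ finishes the extension. For $q=1$, note that $P_t$ on $L^\infty$ was defined in \propref{propA} precisely as the dual of the $L^1$ extension of the weak limit of the adjoints. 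Consistency of the two constructions is exactly the $L^2$ identity just proved.

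The main obstacle is the subsequence issue. $P_t$ is already fixed by a particular sequence $\eps_j$, and $Q$ must be extracted along a subsequence of that same sequence, so the identity with $P_t$ survives. A second point is that $P_t$ is non-unique, so the statement only asserts that some propagator $Q$ exists. It does not claim that every propagator generated by $L^\sharp$ works. The rigorous justification of the $\eps$-level product rule is routine by comparison.
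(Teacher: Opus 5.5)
Your proposal is correct and follows the paper's proof essentially step by step. The steps are the $\eps$-level duality $Q^\eps_t=(P^\eps_t)^*$, extraction of $Q_r$ along a further subsequence of the $\eps_j$ fixed in \propref{propA}, passage to the limit in $\inn{P^{\eps_j}_tf}{g}=\inn{f}{Q^{\eps_j}_tg}$, and extension to $L^q$ by density. You also supply details the paper leaves implicit: the check of \eqref{aCond} for $L^\sharp$, the Lions--Magenes product-rule argument for the $\eps$-level duality, and the weak-* care needed at $q=1$.
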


\begin{proof}

	For $\eps>0$, set $L_\eps=L+\eps\Delta$ as in \eqref{eq:approx} and
	\[
L_\eps^\sharp(r)
	:=L^\sharp(r)+\eps\Delta
	=L_\eps(t-r)^*.
	\]
	Let $P_t^\eps$ be the uniformly parabolic
	propagator generated by $L_\eps$, and let $Q_r^\eps$, $0<r\le t$, be the uniformly parabolic propagator generated by $L_\eps^\sharp$. Using the $L^p$--$L^q$ duality, it is straightforward to check that
	\[
	Q_t^\eps=(P_t^\eps)^*,\quad \eps>0.
	\]
Moreover, the assumptions on the coefficients $a,\,b,\,c$ ensure that $L_\eps^\sharp$ also satisfies the condition of \propref{propA}.
	
	Let $\eps_j\downarrow 0$ be as in \propref{propA}, with $P_t^{\eps_j}$ and $P_t$ satisfying \eqref{eq:luwot}. 
	Apply  \propref{propA} again to $Q_r^{\eps_j}$, after passing to a
	further subsequence, 
	we
	obtain a limit $Q_r\in \cB(L^2(\Omega))$ s.th.~
	$Q_r^{\eps_j}$ converges to $ Q_r$
	locally uniformly in $r$ in the weak operator topology of
	$\cB(L^2(\Omega))$, and    $Q_r$ is a propagator generated by $L^\sharp$. 
	
	It remains to show $Q_t= P_t^*$ on $L^q$ for  $1\le q<\infty$. 
Given $f,g\in L^2(\Omega)$, we compute
	\begin{align*}
		\langle Q_t f,g\rangle_{L^2}
		&=\lim_{j\to\infty}
		\langle(P_t^{\eps_j})^*f,g\rangle_{L^2}\\
		&=\lim_{j\to\infty}
		\langle f,P_t^{\eps_j}g\rangle_{L^2}\\
		&=\langle f,P_tg\rangle_{L^2}\\
		&=\langle P_t^*f,g\rangle_{L^2}.
	\end{align*}
	Hence $Q_t=P_t^*$ on $L^2$.  The claim now follows by continuity and the density of $L^q\cap L^2$ in $L^q$.  This completes the proof.  \end{proof}

	\section{Construction of separation functions}\label{secf}
\begin{proposition}\label{propB1}
	Let $\Om=\Rb^n$. Assume  $ \norm{a}_{L^\infty(\Rb^n\times I)}\le 1$ and $
	\beta:=\norm{\abs{b}+\abs{ \div a}}_{L^\infty(\Rb^n\times I)}<\infty .$
	Then, for any $\delta>0$,
	there exist $\k=\k(n,\beta,\delta)$
	and $\g=\g(n)$ s.th.~any  disjoint non-empty subsets $X,\,Y$ with $d_{XY}\ge\delta$  are $(\k,\g)$-separated.
\end{proposition}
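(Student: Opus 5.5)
We need to construct $\psi\in C_b^\infty(\Rb^n)$ with $\inn{a\grad\psi}{\grad\psi}\le 1$, $\abs{\div(a\grad\psi)}+\abs{\inn b{\grad\psi}}\le\k$, and $\psi(X,Y)\ge\g d_{XY}$. Since $\norm a_{\infty}\le1$, the first condition follows from $\abs{\grad\psi}\le1$. Expanding $\div(a\grad\psi)=\inn{\div a}{\grad\psi}+\operatorname{tr}(a\grad^2\psi)$ gives
\[
\abs{\div(a\grad\psi)}+\abs{\inn b{\grad\psi}}\le \beta\norm{\grad\psi}_\infty+n\norm{\grad^2\psi}_\infty .
\]
So it suffices to find a bounded smooth $\psi$ with $\norm{\grad\psi}_\infty\le1$, $\norm{\grad^2\psi}_\infty\le C(n)/\delta$, and $\inf_X\psi-\sup_Y\psi\ge\g(n)d_{XY}$. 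Then $\k=\beta+nC(n)/\delta$ and $\g=\g(n)$. Since $\Om=\Rb^n$ has no boundary, no boundary condition is needed.

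The construction regularizes a truncated distance function. Set $d=d_{XY}\ge\delta$ and $h(x):=\min(\dist(x,Y),d)$. This $h$ is $1$-Lipschitz, bounded, vanishes on $Y$, and equals $d$ on $X$. Fix a standard mollifier $\eta$ supported in the unit ball and put $\eta_r=r^{-n}\eta(\cdot/r)$ with $r=\delta/4$. Define $\psi:=h*\eta_r$, scaled by a dimensional constant if necessary.

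The estimates then go as follows.

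1. Convolution preserves the Lipschitz bound, so $\abs{\grad\psi}\le1$.

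2. Since $\grad^2\psi=\grad h*\grad\eta_r$, we get $\abs{\grad^2\psi}\le\norm{\grad\eta}_{L^1}/r=C(n)/\delta$.

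3. For the separation, note $\abs{\psi-h}\le r$ pointwise because $h$ is $1$-Lipschitz. Hence $\inf_X\psi-\sup_Y\psi\ge d-2r\ge d-\delta/2\ge d/2$, using $d\ge\delta$.

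So $\g=1/2$ works, and $\psi$ is smooth and bounded with all derivatives bounded.

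The only delicate point is step 3: it must hold with no regularity of $X,Y$ beyond measurability. The pointwise bound $\abs{h*\eta_r-h}\le r$ is valid everywhere, so this is not a real obstacle. It is also exactly where the hypothesis $d_{XY}\ge\delta$ enters: the smoothing scale $r$ must be comparable to $\delta$ to keep the Hessian bound uniform. This is why $\k$ depends on $\delta$ while $\g$ does not.
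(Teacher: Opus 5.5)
Your proof is correct, but it takes a different and more elementary route than the paper.

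The paper first passes to closures. It then takes Whitney's regularized distance $\rho$ for $Y$: this is comparable to $\ell=\dist(\cdot,Y)$ up to a factor $c=c(n)$, with $|\nabla\rho|\le c$ and $|\nabla^2\rho|\le c/\ell$. It sets $\psi=\frac{d}{c}G(\rho/d)$, where the derivative of the profile $G$ lives only where $\rho/d\in(\frac1{4c},\frac2c)$. This yields $|\nabla\psi|\le 1$, $|\nabla^2\psi|\le C(n)/d$, and $\g=1/(2c^2)$.

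You replace all of this by mollifying the truncated distance $\min(\dist(\cdot,Y),d)$ at scale $r=\delta/4$. You only use that $h$ is $1$-Lipschitz, that $\nabla^2(h*\eta_r)=\nabla h*\nabla\eta_r$, and that $|h*\eta_r-h|\le r$. No closure and no Whitney decomposition is needed.

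Your construction also gives a better, dimension-free constant, $\g=\frac12$. In fact any $\g=1-\epsilon$ is available by taking $r=\epsilon\delta/2$, at the price $\k=\beta+C(n)/(\epsilon\delta)$. Since $\al'=\al/\g^2$ in \thmref{thm}, this brings the Gaussian constant arbitrarily close to $\al$, whereas the paper's $\g$ gives $\al'=4c^4\al$. Choosing $r=d/4$ instead would also reproduce the paper's scale-adapted bound $|\nabla^2\psi|\le C(n)/d$, so the smoothing scale need not be tied to $\delta$ as you suggest.

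What the paper's template buys is uniformity with the manifold case. The Riemannian separation function in the next proposition is built exactly the same way, from a distance-like function with controlled Hessian composed with a one-dimensional profile. Convolution, by contrast, relies on the translation structure of $\Rb^n$ and has no direct analogue there.
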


\begin{proof}
	By taking closure if necessary, we assume in the remainder of this proof $X,\,Y$ are closed. 
	
	Write $
	\ell(x):=\dist(x,Y),$ and $
	d:=d_{XY}\ge \delta.
	$
	By Whitney's extension theorem  (see e.g., Theorem 2 on p.~171 of \cite{Ste}), there exists $
	\rho\in C(\Rb^n)\cap C^\infty(\Rb^n\setminus Y),$ $\rho|_Y=0,$
	such that, for some $c>1$ depending only on $n$ and all $x\in\Rb^n$,
	\begin{align}
	&	c^{-1}\ell(x)
		\le \rho(x)\le c\ell(x),\quad\text{ in }\Rb^n,\notag\\&
		\abs{\nabla\rho(x)}\le c,\quad
		\abs{\nabla^2\rho(x)}
		\le \frac{c}{\ell(x)},\quad\text{ in }\Rb^n\setminus Y.
		\label{eq:regularized-distance}
	\end{align}
	Choose
	$\vartheta\in C_c^\infty(\Rb)$ such that
	\[
	0\le\vartheta\le1,\qquad
	\vartheta=1\quad\text{on }\left[\frac1{2c},\frac1c\right],
	\qquad
	\supp\vartheta\subset\left(\frac1{4c},\frac2c\right).
	\]
	Define $
	G(s):=\int_{-\infty}^s\vartheta(u)\,du$
	and set
	\[
	\psi(x):=\frac{d}{c}
	G\left(\frac{\rho(x)}{d}\right).
	\]
	Since $G=0$ on $\left(-\infty,\frac1{4c}\right]$ and
	$\rho(x)\le c\ell(x)$, we have $
	\psi(x)=0$ whenever $
	\ell(x)\le \frac{d}{4c^2}.$
	Thus $\psi$ is smooth on all of $\Rb^n$.  Since, moreover, $G$ is constant on
	$\left[\frac2c,\infty\right)$, we have
	$\psi\in C_b^\infty(\Rb^n)$.
	
	We now show that this $\psi$ satisfies \eqref{kCond}--\eqref{gCond}.
	To begin with,  we compute
	\begin{align*}
		\nabla\psi
		=&\frac1c
		\vartheta\left(\frac{\rho }d\right)\nabla\rho,\\
		\nabla^2\psi
		=&\frac1c\left[
		\frac1d\vartheta'\left(\frac{\rho}{d}\right)
		\nabla\rho\otimes\nabla\rho
		+
		\vartheta\left(\frac{\rho}{d}\right)\nabla^2\rho
		\right].
	\end{align*}
	On the support of  $\vartheta(\rho/d)$, we have $
	\rho\ge\frac{d}{4c}$ and $
	\ell\ge\frac{\rho}{c}
	\ge\frac{d}{4c^2}.
	$
	Using \eqref{eq:regularized-distance} and the assumption $d\ge \delta$, we therefore obtain, in all of $\Rb^n$,
	\[
	\abs{\nabla\psi} \le1,\quad
	\norm{\nabla^2\psi}
	\le \frac{C(n)}{d}
	\le \frac{C(n)}{\delta}.
	\]
	Finally, since  $
	\div(a\nabla\psi)
	=(\div a)\cdot\nabla\psi+a:\nabla^2\psi,$ we have
	\begin{align*}
		\abs{\div(a\nabla\psi)}
		+\abs{\inn{b}{\nabla\psi}}
		&\le
		C(n)\norm{a}\,\abs{\nabla^2\psi}
		+\bigl(\abs{\div a}+\abs b\bigr)\abs{\nabla\psi}  \\
		&\le
		\frac{C(n)}{\delta}+\beta.
	\end{align*}
	Thus \eqref{kCond} holds with $\k:=
	\beta+\frac{C(n)}{\delta}$.
	
	Next, if $x\in X$,
	then $\ell(x)\ge d$ and hence $\rho(x)\ge d/c$.  Consequently,
	\begin{align*}
		\psi(x)
		&\ge \frac{d}{c}G\left(\frac1c\right)
		\ge \frac{d}{c}\int_{1/(2c)}^{1/c}\vartheta(u)\,du
		=\frac{d}{2c^2}.
	\end{align*}
	Since, moreover, $\psi(y)=0$ for $y\in Y$,
	it follows that $
	\psi(X,Y)\ge\frac{d}{2c^2}$, and therefore \eqref{gCond} holds with $
	\gamma:=\frac1{2c^2}.$
\end{proof}

\begin{proposition}
	Let $(M,g)$ be a complete non-compact Riemannian manifold of
	dimension $n\ge2$ satisfying, for some $K,\,i_0>0$,
	\[
	\abs{\Ric}\le(n-1)K,
	\qquad
	\inj(M,g)\ge i_0.
	\]
	Let
	$\Om$ be a  smooth domain  in $M$. 
	Assume  $ \norm a_{L^\infty(\Om\times I)}\le 1$ and $
	\beta:=\sup_{\Om\times I}
	\bigl(\abs{\div a}+\abs b\bigr)<\infty$.
	Then there exists $C:=C(n,K)>1$ such that the following holds.
	For any $\delta>C-1$, there exist $\k=\k(n,\beta,C)$, $\g=\g(C,\delta)$, s.th.~two non-empty subsets $X,\,Y\subset\Om$ are
	$(\k,\g)$-separated if
	$
	Y\subset\overline{B_r(o)}$ and $
	X\subset B_R(o)^\cp$ for $R>r>0$ and $o\in\Om$ satisfying  $R-r=d_{XY}\ge\delta$ and $B_{R+2}(o)\subset\Om$.
\end{proposition}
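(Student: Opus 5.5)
We will mirror the Euclidean construction of \propref{propB1}, with the Euclidean distance replaced by a smoothed Riemannian distance to the base point $o$. Under $\abs{\Ric}\le(n-1)K$ and $\inj\ge i_0$, the manifold has bounded geometry. We then invoke a distance-like function with controlled Hessian, as in \cite[Prop.~1.3]{RV}, or alternatively the Schoen--Yau construction \cite[Thm.~4.2]{SY} upgraded to a Hessian bound by harmonic-coordinate estimates. This gives $\rho\in C^\infty(M)$ and a constant $C=C(n,K)>1$ such that
\begin{align*}
d(x,o)-C+1\le \rho(x)\le d(x,o)+1,\quad \abs{\nabla\rho}\le C,\quad \abs{\nabla^2\rho}\le C
\end{align*}
for $d(x,o)\ge 1$. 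The exact normalization of the additive errors is to be fixed so that it matches the hypothesis $\delta>C-1$. The threshold $\delta>C-1$ is what makes the sublevel sets of $\rho$ separate $\overline{B_r(o)}$ from $B_R(o)^\cp$ with a positive gap, namely at least $R-r-(C-1)\ge d_{XY}-(C-1)$.

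Next we compose $\rho$ with a one-dimensional profile. We choose $F\in C^\infty(\Rb)$ nondecreasing, with $0\le F'\le 1/C$ and $\abs{F''}\le 1$. We require $F$ to be constant for $s\le r+1$, and also constant for $s\ge R-C+1$ or thereabouts, so that $\psi$ is bounded. Its increment across the gap must be at least a fixed fraction of $d_{XY}-(C-1)$. Since $d_{XY}\ge\delta>C-1$, we have
\[
d_{XY}-(C-1)\ge \Bigl(1-\tfrac{C-1}{\delta}\Bigr)d_{XY},
\]
and this yields $\g=\g(C,\delta)$ in \eqref{gCond}. Set $\psi:=F\circ\rho$ on $\Om$.

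Then $\abs{\nabla\psi}\le 1$, so $\inn{a\grad\psi}{\grad\psi}\le \norm{a}_{L^\infty}\le1$. Writing $\div(a\nabla\psi)=\inn{\div a}{\nabla\psi}+a:\nabla^2\psi$ and using
\[
\nabla^2\psi=F''\,\nabla\rho\otimes\nabla\rho+F'\,\nabla^2\rho,
\]
we get $\abs{\div(a\grad\psi)}+\abs{\inn{b}{\grad\psi}}\le C(n)(C^2+1)+\beta$, and this defines $\k(n,\beta,C)$.

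To land in $C_b^\infty(\bar\Om)$ we need $\grad\psi=0$ on $\di\Om$. This holds because $\psi$ is constant outside $\{\rho\le R-C+1\}\subset B_{R+2}(o)\subset\Om$ and constant near $o$, provided the upper plateau of $F$ begins before $\rho$ reaches $R+1$. The inequality $\rho\le d+1$ ensures this.

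The main obstacle is the first step: producing a smooth exhaustion with a \emph{uniform} Hessian bound and an additive (not merely multiplicative) comparison with $d(\cdot,o)$. A multiplicative comparison would only give $\g$ depending on the ratio $R/r$, not on $d_{XY}$ alone. If \cite{RV} only supplies a multiplicative bound, we will instead mollify $d(\cdot,o)$ at unit scale in harmonic coordinates, which exist with uniform $C^{1,\alpha}$ control by the Anderson--Cheeger theory under the Ricci and injectivity bounds. Mollification at unit scale preserves the Lipschitz constant up to $C$, shifts values by $O(1)$, and yields second derivatives $O(1)$. This gives exactly the additive comparison needed.
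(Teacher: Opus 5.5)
Your proposal is correct and is essentially the paper's proof. You take the distance-like function of \cite[Prop.~1.3]{RV}, with an additive comparison to $d(\cdot,o)$ and bounded gradient and Hessian, and compose it with a one-dimensional profile of slope at most $1/C$ (the paper uses $\psi=C^{-1}G\circ\rho$ with $G'=1$ on the gap); you then read off $\k=\beta+C(n)(C^2+1)$ and $\g=\g(C,\delta)$ from the gap of length $d_{XY}-(C-1)$.

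Two small fixes. First, the vanishing of $\grad\psi$ near $\di\Om$ follows from the \emph{lower} bound $\rho\ge d(\cdot,o)-C+1$, since points near $\di\Om$ have $d(\cdot,o)\ge R+2$; the upper bound $\rho\le d(\cdot,o)+1$ does not give it. Second, putting the ramps of the profile just outside the gap, as the paper does, avoids the loss from ramping inside it and gives $\g=\frac1C\bigl(1-\frac{C-1}{\delta}\bigr)$ directly.
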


\begin{proof}
	The proof is similar to that of the preceding proposition.
	By  	Proposition~1.3 of \cite{RV},
	there exists $\rho\in C^\infty(M)$ such that, for
	$\ell(x):=d(o,x)$,
	\[
	\ell+1\le \rho\le \ell+C,\qquad
	\abs{\grad \rho}\le C,\qquad
	\norm{\nabla^2\rho}\le C.
	\]
	Choose $G\in C_b^\infty(\R)$ such that
	$0\le G'\le1$, $G'=1$ on $[r+C,R+1]$, $\supp G'\subset (r+C-1,R+2)$, and
	$\norm{G''}_\infty\le c_0$, where $c_0$ is an absolute
	constant. Choose $\psi=C^{-1}G\circ \rho\vert _{\bar\Om}$. Then
	$\psi\in C_b^\infty(\bar \Om)$ and 
	\[
	\sup_{\Om\times I}
	\left(
	\abs{\div(a\grad\psi)}
	+\abs{\inn{b}{\grad\psi}}
	\right)
	\le\beta+n({c_0C+1 } ).
	\]
	Consequently, \eqref{kCond} holds with $\k:=\beta+ n({c_0C+1} )$. Finally,
	the radial containments and the two-sided bounds for $\rho$ give  $\rho\le r+C$ on $Y$ and $\rho\ge R+1$ on
	$X$. Since, moreover, $G'=1$ on $[r+C,R+1]$ and $d_{XY}=R-r\ge \delta$, we find
	\[
	\psi(X,Y)
	\ge\frac{R-r-(C-1)}{C}
	\ge
	\frac1C\left(1-\frac{C-1}{\delta}\right)d_{XY}.
	\]
	Thus \eqref{gCond} holds with $
	\g:=\frac1C\left(1-\frac{C-1}{\delta}\right).$
\end{proof}

\bibliography{paraLE}
	\end{document}